\newcommand{\qq}{\mathfrak{q}}
\newtheorem{thm}{Theorem}[section]
\newtheorem{lem}[thm]{Lemma}
\theoremstyle{definition}
\newtheorem{defn}[thm]{Definition}
\newtheorem{rem}[thm]{Remark}
\newtheorem{ques}[thm]{Question}
  \newcommand{\param}{{\mathchoice{\mkern1mu\mbox{\raise2.2pt\hbox{$
  \centerdot$}}
  \mkern1mu}{\mkern1mu\mbox{\raise2.2pt\hbox{$\centerdot$}}\mkern1mu}{
  \mkern1.5mu\centerdot\mkern1.5mu}{\mkern1.5mu\centerdot\mkern1.5mu}}}
\newcommand{\showcomments}{yes}
\newsavebox{\commentbox}
\begin{document}
\title[]{Quasi-redirecting boundaries of groups with linear divergence and  3-manifold groups}

\date{\today}

\author{Hoang Thanh Nguyen}
\address{Hoang Thanh Nguyen, Department of Mathematics, FPT University, Hoa Hai ward, Ngu Hanh Son district, Da Nang, Vietnam}
\email{nthoang.math@gmail.com}

\keywords{linear divergence, quasi-redirecting boundary, 3-manifold groups}

\subjclass[2010]{20F65, 20F67}

\begin{abstract}
The quasi-redirecting (QR) boundary, introduced by Qing and Rafi,
generalizes the Gromov boundary for studying the large-scale geometry of
finitely generated groups. Although it is not known to exist for all such groups, its existence has been established for several important classes. We prove that if a finitely generated group $G$ has linear divergence, then its QR-boundary is well-defined and consists of a single point. In addition, we show that all finitely generated 3-manifold groups admit well-defined QR-boundaries.
\end{abstract}

\maketitle

\section{Introduction}
The quasi-redirecting (QR) boundary is a close generalization of the Gromov boundary to all finitely generated groups \cite{QR24}. 
One of the advantages of the QR-boundary is that it is a new quasi-isometry invariant boundary that is often compact, containing sublinearly Morse boundaries \cite{QRT22}, \cite{QRT24} as topological subspaces, capturing a richer spectrum of hyperbolic-like behaviors, making it a promising new tool in geometric group theory.

 The QR boundary is defined as follows:
 \begin{defn}
    Let $\alpha, \beta \colon [0, \infty) \to X$ be two quasi-geodesic rays in a metric space $X$. 
     We say $\alpha$ can be \textit{quasi-redirected} to $\beta$ (and write $\alpha \preceq \beta$) if there exists a pair of constants $(q, Q)$ such that for every $r >0$, there exists a $(q, Q)$--quasi-geodesic ray $\gamma$ that is identical to $\alpha$ inside the ball $B(\alpha(0), r)$ and eventually $\gamma$ becomes identical to $\beta$. We say $\alpha \sim \beta$ if $\alpha \preceq \beta$ and $\beta \preceq \alpha$. 
     
     The resulting set of equivalence classes forms a poset, denoted by $P(X)$ (in this paper, we will call it the \textit{QR-poset}). This poset $P(X)$, when equipped with a ``cone-like topology'' (see \cite[Section 5]{QR24}), is called the \textit{quasi-redirecting boundary} (QR boundary) of $X$ and denoted by $\partial_{*} X$.
\end{defn}

Qing and Rafi \cite{QR24} established key properties of the QR-boundary, with further developments in \cite{GQV24}.
While QR boundaries are shown to be well-defined for several classes of groups of interest, including relatively hyperbolic groups, Croke-Kleiner admissible groups, non-geometric 3-manifold groups \cite{QR24}, \cite{NQ25}, its existence for all finitely generated groups remains an open question.

\begin{ques}\cite[Question D]{QR24}
  Let $X$ be a Cayley graph of a finitely generated group. Is $\partial_{*} X$ always defined? Is $\partial_{*} X$ always compact?  
\end{ques}

In \cite[Section 4]{QR24}, the authors show that the QR-boundary of a direct product of two infinite finitely generated groups consists exactly of one point. They also mention the work in \cite{McM} where the author shows the same holds for Baumslag-Solitar group. These groups share a common
 feature: linear divergence, a quasi-isometry invariant that measures the minimal
 path length outside a ball connecting two points on its boundary, as a function
 of the ball’s radius  \cite{Gro96}, \cite{Ger94a}, \cite{DMS10} (see the precise definition in Section~\ref{sec:divergence}).

This observation naturally raises the following question:

\begin{ques}
\label{ques:main}
    For a finitely generated group $G$, does linear divergence imply that $\partial_* G$ is a single point?
\end{ques}

Our result provides an affirmative answer to Question~\ref{ques:main}. 

\begin{thm}
    \label{thm:main1} Let $G$ be a finitely generated group. If $G$ has linear divergence, then the QR-boundary $\partial_{*} G$ consists of exactly one point.
\end{thm}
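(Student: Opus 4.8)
The plan is to prove the stronger statement that any two quasi-geodesic rays in a Cayley graph $X$ of $G$ (based at the identity $e$) are equivalent under $\sim$, so that the QR-poset $P(X)$ has a single element and $\partial_{*}X$ is one point; note the boundary is nonempty because linear divergence forces $G$ to be infinite (hence $X$ carries a geodesic ray) and one-ended (so complements of large balls have an unbounded component). By the symmetry of the quasi-redirecting relation under interchanging the two rays, it suffices to show $\alpha \preceq \beta$ for arbitrary rays $\alpha, \beta$ based at $e$. The one fact I will extract from linear divergence is operational: there are constants $C \ge 1$ and $\delta \in (0,1]$ such that any two points $x,y$ lying outside $B(e,\rho)$ in a common unbounded component can be joined by a path $\sigma$ avoiding $B(e,\delta\rho)$ with $\mathrm{length}(\sigma) \le C\,d(x,y) + C$; moreover, taking $\sigma$ to be a shortest such path, linearity makes the complement path-metric $C$-bi-Lipschitz to the ambient metric on points outside the ball, so that $\sigma$ is itself a uniform $(C,C)$-quasi-geodesic of $X$. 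The precise form follows from the definition in Section~\ref{sec:divergence}.

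Given $r>0$, I will build the redirecting ray $\gamma$ in three pieces. Choose a radius $R$ large relative to $r$ and to the quasi-geodesy constants $(q,Q)$ of $\alpha,\beta$; concretely $R \ge \max\{q^{2}(r+Q)+Q,\ r/\delta\}$ suffices. Pick parameters $t_R, s_R$ with $d(e,\alpha(t_R)) = R = d(e,\beta(s_R))$, and let $\gamma$ be the concatenation of $\alpha|_{[0,t_R]}$, then a divergence detour $\sigma$ from $\alpha(t_R)$ to $\beta(s_R)$ avoiding $B(e,\delta R)$ of length at most $C\,d(\alpha(t_R),\beta(s_R)) + C$, then the tail $\beta|_{[s_R,\infty)}$. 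The choice of $R$ guarantees agreement with $\alpha$ inside $B(e,r)$: every visit of $\alpha$ to $B(e,r)$ occurs at parameter $< q(r+Q) \le t_R$, the detour avoids $B(e,\delta R) \supseteq B(e,r)$, and the $\beta$-tail has $d(e,\beta(s)) \ge \tfrac{1}{q}s_R - Q \ge r$ for all $s \ge s_R$ and so stays outside $B(e,r)$. Hence $\gamma \cap B(e,r) = \alpha \cap B(e,r)$, and $\gamma$ eventually coincides with $\beta$ by construction.

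The heart of the argument, and the step I expect to be the main obstacle, is verifying that $\gamma$ is a $(q',Q')$-quasi-geodesic with $(q',Q')$ \emph{independent of} $r$ (and of $R = R(r)$, which tends to infinity). The upper bound is immediate from the arc-length parametrization. For the lower bound one argues piecewise: on the $\alpha$- and $\beta$-parts it is inherited from $\alpha,\beta$; on $\sigma$ it holds because $\sigma$ is a uniform quasi-geodesic by the bi-Lipschitz consequence of linear divergence; and across the two junctions it follows because the detour makes \emph{definite progress} — this is exactly where linearity is essential, since the bound $\mathrm{length}(\sigma) \le C\,d(\alpha(t_R),\beta(s_R)) + C$ controls the parameter-length of the detour by a fixed multiple of the distance between its endpoints, so two points straddling $\sigma$ cannot be close in $X$ while far apart in parameter. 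Choosing $q' \ge C$ absorbs the worst case, $t$ at the end of the $\alpha$-piece and $t'$ at the start of the $\beta$-tail.

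The technical care goes into the cross-junction estimate: for $t$ on $\alpha|_{[0,t_R]}$ and $t'$ on the $\beta$-tail I must bound $d(\gamma(t),\gamma(t'))$ below by a fixed fraction of $(t_R-t) + \mathrm{length}(\sigma) + (t'-s_R)$, using the radial growth $d(e,\gamma(\,\cdot\,)) \ge \tfrac{1}{q}(\,\cdot\,) - Q$ along the two quasi-geodesic pieces together with the fact that $\sigma$ never enters $B(e,\delta R)$. A secondary subtlety is matching the radius $\delta R$ used for $\sigma$ with the ball radius appearing in the divergence estimate when it is applied to pairs of points on $\sigma$ itself; this is handled by absorbing the fixed ratio $\delta$ into the constants, and is the only place the exact form of the definition in Section~\ref{sec:divergence} enters. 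Once uniform constants $(q',Q')$ are obtained, letting $r$ vary shows $\alpha \preceq \beta$, and symmetry gives $\alpha \sim \beta$; since $\alpha,\beta$ were arbitrary, $\partial_{*}G$ consists of a single point.
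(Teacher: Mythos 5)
Your high-level plan (exit along $\alpha$, cross over by a linear-divergence detour at a large radius, ride the target ray, and repeat at arbitrarily many radii) is the same instinct as the paper's Case~2, but two load-bearing steps are asserted rather than proved, and the first one fails as stated. The fatal step is concatenating with the tail of the arbitrary $(q,Q)$--ray $\beta$ itself. No choice of detour $\sigma$ can then yield constants $(q',Q')$ independent of $r$, because the tail $\beta|_{[s_R,\infty)}$ is only forced to stay outside a ball of radius comparable to $R/q^2$ and may return to touch $\alpha|_{[0,t_R]}$ far from the junction. Concretely, in $X=\mathbb{R}^2$ (which has linear divergence): let $\alpha$ be the positive $x$-axis, and let $\beta$ run out the negative $x$-axis at top speed $q$ to $(-R,0)$, swing around outside $B(o,3R/q^2)$ to land on the point $(4R/q^2,0)=\alpha(4R/q^2)$, and then continue out the positive $x$-axis; this is a $(q,O(1))$--ray, and such excursions can be spliced in at infinitely many scales $R_n\to\infty$ while keeping uniform constants. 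For $R$ near $R_n$, your $\gamma$ contains the point $(4R_n/q^2,0)$ twice --- once as $\alpha(4R_n/q^2)$ and once on the $\beta$-tail --- with parameter gap at least $t_R-4R_n/q^2\geq R_n/2$, so $\gamma$ is not a $(q',Q')$--quasi-geodesic once $R_n\gg q'Q'$. Your ``radial growth'' cross-junction estimate cannot detect this, since the two offending points sit at comparable radii. This is precisely why the paper does \emph{not} redirect onto $\beta$ directly: it first replaces $\beta$ by a geodesic representative $\bar\beta$ (Lemma~\ref{lem:geodesicrepresentativeunder}), builds the entire redirecting segment $\xi$ inside the ball $B_{(1+C)s}$, and attaches the tail of $\bar\beta$ beyond radius $(1+C)s$, which, being a geodesic tail, can never re-enter that ball; then nearest-point projection surgery (Lemma~\ref{concate}) applies at the last junction, and $\alpha\preceq\bar\beta\preceq\beta$ follows by transitivity.

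The second gap is your claim that linear divergence makes the path metric on $X\setminus B(o,\delta R)$ uniformly bi-Lipschitz to the ambient metric, so that a shortest complement path is a uniform $(C,C)$--quasi-geodesic. This does not follow from the definition, and it is not a matter of ``absorbing the fixed ratio $\delta$ into the constants'': the divergence function only controls paths avoiding a ball of at most \emph{half} the radius at which the endpoints sit (the equivalence of $\operatorname{Div}_{\gamma}(n,\delta)$ over parameters requires $\delta\le 1/2$). Applied to two points of your detour lying just outside $B(o,\delta R)$, it produces a path avoiding only $B(o,\delta^2 R)$, which gives no upper bound whatsoever on their distance in the complement of $B(o,\delta R)$; iterating only shrinks the avoided ball further. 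The paper's substitute for this is Tran's straightening lemma (Lemma~\ref{ll2}), which converts a path of length $<Cs$ whose endpoints are at distance $>s$ into an $(L,0)$--quasi-geodesic confined to a small neighborhood of the path, followed by the Annulus Surgery (Lemma~\ref{lem:keylemma}). Since Tran's lemma requires the detour's endpoints to be a definite fraction of $s$ apart, the paper must split off the complementary regime --- when $\bar\beta(s)$ comes $\tfrac{39}{72}s$-close to $\alpha$'s exit point at infinitely many scales --- and handle it by a different surgery with no detour at all (Lemma~\ref{redirect11}). Your single-case argument has no mechanism for producing a uniform quasi-geodesic detour in that close regime either. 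So both the quasi-geodesity of the detour and the junction estimates require machinery the proposal skips; as written it does not prove the theorem.
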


This result confirms the existence and triviality of the QR-boundary for groups
 with linear divergence, adding to known examples such as:

\begin{itemize}
    \item Lattices in semi-simple Lie groups of $\mathbb Q$--rank $1$ and $\mathbb R$--rank $\ge 2$, uniform lattices in higher rank semi-simple Lie groups \cite{DMS10}.
    \item Thompson groups $F$, $T$, and $V$ \cite{GS19} and higher Thompson groups \cite{Kod24}.
    \item Non-virtually cyclic groups that satisfy a law \cite{DS05}, \cite{DMS10}.
    \item One-ended solvable groups \cite{DS05}, \cite{DMS10}.
    \item Wreath products, permutational wreath products of groups, Houghton groups $\mathcal{H}_m$ with $m \ge 2$, Baumslag-Solitar groups \cite{I23}.
\end{itemize}

Linear divergence is equivalent to \textit{wide} (i.e., not having cut-points in the asymptotic cones) \cite{DMS10}, and wide groups have empty Morse boundary \cite{DMS10}.
\cite[Question 4.4]{QR24} asks if $G$ does not have an Morse element, is $P(G)$ a single
point. In \cite{GQV24}, the authors answer \cite[Question 4.4]{QR24} in the affirmative when $G$ acts geometrically on a finite-dimensional CAT(0) cube complex. Our result Theorem~\ref{thm:main1} gives the affirmative  for \cite[Question 4.4]{QR24} for the class of wide groups.

\cite[Theorem A, Theorem B] {NQ25} show that the QR poset of graph
 manifold groups, and more generally of Croke-Kleiner admissible groups \cite{CK02},
 has QR-poset of height $2$. This is connected to the fact that these groups have quadratic
 divergence. Our result shows that groups with linear divergence have QR
 poset of height $1$. This naturally raises the question of whether there is a
 systematic relationship between divergence and QR-boundary structure.
 \begin{ques}
     If a group has divergence that is a polynomial of degree $d$, is
 it true that its QR poset has height $d$?
 \end{ques}

As an application of Theorem~\ref{thm:main1}, we establish a comprehensive result for finitely generated 3-manifold groups:
\begin{thm}
    \label{thm:main2}
 All finitely generated 3-manifold groups have well-defined QR-boundaries.  
\end{thm}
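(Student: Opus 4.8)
The plan is to combine Theorem~\ref{thm:main1} with the geometric decomposition of \(3\)--manifolds and the known good behaviour of QR-boundaries under relative hyperbolicity. First I would reduce to the compact case: if \(G=\pi_1(N)\) is a finitely generated \(3\)--manifold group, then by Scott's compact core theorem \(G\cong\pi_1(M)\) for a compact \(3\)--manifold \(M\), so \(G\) is finitely presented; since the QR-boundary is a quasi-isometry invariant, I may pass to finite-index subgroups freely and assume \(M\) is orientable. The prime (connected sum) decomposition then writes \(G\) as a free product \(\pi_1(P_1)*\cdots*\pi_1(P_k)\) of freely indecomposable factors, the \(S^2\times S^1\) summands contributing \(\Z\) factors. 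A free product is hyperbolic relative to its one-ended factors (the finite and two-ended factors being absorbed into the hyperbolic part), so \(G\) is relatively hyperbolic with peripheral subgroups the one-ended \(\pi_1(P_i)\). By the relatively hyperbolic combination result of \cite{QR24,NQ25}, which shows that a relatively hyperbolic group has a well-defined QR-boundary once its peripheral subgroups do, it therefore suffices to treat each prime factor \(\pi_1(P_i)\) individually.

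Next I would dispatch the prime factors according to Geometrization. If \(P_i\) is non-geometric (equivalently has nontrivial JSJ decomposition), then \(\pi_1(P_i)\) is either a graph-manifold group, hence Croke--Kleiner admissible and covered by \cite{CK02,NQ25}, or a mixed-manifold group covered by the non-geometric \(3\)--manifold case of \cite{QR24,NQ25}. If \(P_i\) is geometric, I would argue by its Thurston geometry. The spherical case (\(S^3\)) gives a finite group with empty boundary, and \(S^2\times\R\) gives a two-ended virtually cyclic group whose QR-boundary is its two-point Gromov boundary. The hyperbolic case (\(\Hyp^3\)) is Gromov hyperbolic when \(P_i\) is closed, with QR-boundary equal to the Gromov boundary, and relatively hyperbolic with respect to the cusp subgroups \(\cong\Z^2\) when \(P_i\) has cusps, again covered by \cite{QR24}.

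The remaining five geometries are precisely the ones producing \textbf{linear divergence}, where Theorem~\ref{thm:main1} applies and yields a single-point QR-boundary. Indeed: the Euclidean case \(\E^3\) gives a group virtually \(\Z^3\); the \(\mathrm{Nil}\) case gives a finitely generated nilpotent, non-virtually-cyclic group, which satisfies a law and is hence wide by \cite{DS05,DMS10}; the \(\mathrm{Sol}\) case gives a one-ended solvable group, wide by \cite{DS05,DMS10}; the \(\Hyp^2\times\R\) case gives a group virtually a direct product \(\pi_1(\Sigma)\times\Z\) of two infinite groups, hence of linear divergence; and the \(\widetilde{\mathrm{SL}}_2(\R)\) case gives a group quasi-isometric to \(\Hyp^2\times\R\), so again of linear divergence. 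In each of these Theorem~\ref{thm:main1} gives that \(\partial_*\pi_1(P_i)\) is a single point. Collecting all cases shows every prime factor has a well-defined QR-boundary, and the relatively hyperbolic combination completes the proof.

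The hard part will not be the case analysis but making the \emph{assembly} rigorous: one needs a clean statement that relative hyperbolicity with peripheral subgroups having well-defined QR-boundaries passes well-definedness to the ambient group, applied uniformly to (i) the outer free-product structure from the prime decomposition, (ii) the cusped hyperbolic pieces relative to \(\Z^2\), and (iii) the peripheral tori inside mixed and graph-manifold pieces. In particular the structure is nested, since a cusped hyperbolic prime factor is itself only relatively hyperbolic, so I must be sure the cited results of \cite{QR24,NQ25} apply with peripheral subgroups that are themselves relatively hyperbolic, and that the bookkeeping for non-orientable \(M\), boundary components, and finite or two-ended free factors is correctly absorbed. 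Once the combination statement is isolated at the right level of generality, the divergence inputs for \(\mathrm{Nil}\), \(\mathrm{Sol}\), and \(\widetilde{\mathrm{SL}}_2(\R)\) become routine citations feeding into Theorem~\ref{thm:main1}.
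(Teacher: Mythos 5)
Your reduction (Scott core, orientation double cover, free--product decomposition, and the relatively hyperbolic combination theorem of \cite{NQ25}) and your geometry-by-geometry treatment of the prime factors match the paper's proof for those cases, including the use of Theorem~\ref{thm:main1} for the Nil, Sol, $\E^3$, $\Hyp^2\times\R$ and $\widetilde{SL}(2,\R)$ geometries. But there is a genuine gap: your case analysis tacitly assumes that every one-ended prime factor has empty or toroidal boundary. The geometric/non-geometric dichotomy you invoke --- and \cite[Theorem A]{NQ25}, which is what covers the non-geometric case --- applies only to compact, orientable, irreducible $3$--manifolds \emph{with empty or tori boundary}; that restriction is built into the definition of ``geometric'' and ``non-geometric'' used here and in \cite{NQ25}. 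An irreducible, $\partial$--irreducible, compact orientable $3$--manifold can perfectly well have boundary components of genus at least $2$, and such factors do arise after the sphere-disc decomposition. Some of these are harmless (e.g.\ a compact hyperbolic manifold with totally geodesic boundary has word-hyperbolic fundamental group, so \cite{QR24} applies), but a factor with nontrivial JSJ decomposition and a higher-genus boundary component is covered by none of your cases: it is not geometric, it is outside the scope of \cite[Theorem A]{NQ25}, its group is not word-hyperbolic, and its divergence is not linear, so Theorem~\ref{thm:main1} does not apply either.

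This missing case is precisely the one the paper highlights as previously unsettled, and it is where the real work lies: Lemma~\ref{lem:highergenus} pastes compact hyperbolic $3$--manifolds with totally geodesic boundary onto the higher-genus boundary components of $M$ (following \cite{Sun20}) to embed $M$ in a finite-volume hyperbolic or mixed manifold $N$ with only toroidal boundary, invokes \cite{NS19} to see that $\pi_1(M)\hookrightarrow\pi_1(N)$ is a quasi-isometric embedding, and then performs a surgery argument (using quasiconvexity of $\widetilde{M}_j$ in the hyperbolic pieces $\widetilde{N}_j$) to replace redirecting quasi-geodesics of $\widetilde{N}$ by uniform-quality quasi-geodesics lying entirely in $\widetilde{M}$, so that the QR-assumptions for $\widetilde{N}$ descend to $\widetilde{M}$. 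Without an argument of this kind, your proof establishes the theorem only for $3$--manifolds all of whose prime factors have empty or toroidal boundary. By contrast, the ``nested relative hyperbolicity'' issue you flag as the hard part is not actually a problem: \cite[Theorem D]{NQ25} only requires each peripheral subgroup to satisfy the QR-assumptions, with no hypothesis on whether those subgroups are themselves hyperbolic, relatively hyperbolic, or anything else.
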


This result addresses cases left unresolved in \cite{NQ25}. While \cite[Theorem A]{NQ25} showed that QR-boundaries are well-defined for fundamental groups of non-geometric 3-manifolds, the existence of QR-boundaries for geometric 3-manifolds--particularly those modeled on the Sol and Nil geometries and the broader scenario of 3-manifolds with higher genus boundaries was not completely settled.

These cases were excluded precisely because it was unknown whether their fundamental groups Sol and Nil 3-manifolds satisfy the necessary QR-assumptions.  Since Sol and Nil 3-manifold groups are known to have linear divergence \cite{Ger94}, it follows from Theorem~\ref{thm:main1} that their fundamental groups have well-defined QR-boundary. We use this observation as a first step to conclude that all finitely generated 3-manifold groups admit a well-defined QR-boundary. Theorem~\ref{thm:main2} strengthens the role of QR boundaries as a tool for studying the coarse geometry of finitely generated 3-manifold groups, one of the central topics in geometric group theory.

\subsection*{Overview} This paper is organized as follows. Section~\ref{sec:preliminary} reviews preliminary concepts, including the QR-boundary construction and divergence. Section~\ref{sec:preliminary} proves Theorem~\ref{thm:main1}, demonstrating that groups with linear divergence have a single-point QR-boundary. In Section~\ref{sec:3-manifold}, we give a proof of Theorem~\ref{thm:main2}.

\subsection*{Acknowledgments} 
The author thanks Yulan Qing and Minh Nhat Doan for the helpful conversations.

\section{Preliminary}
\label{sec:preliminary}

\subsection{Coarse geometry}
In this section, we recall the construction of quasi-redirecting boundary as presented in~\cite{QR24}.
Let $X$ and $Y$ be metric spaces and $f$ be a map from $X$ to $Y$. Let ${\qq}=(q, Q) \in[1,\infty) \times [0,\infty)$ be a pair of constants. 

\begin{defn}
\begin{enumerate}
	
	\item We say that $f$ is a \emph{$(q, Q)$--quasi-isometric embedding} if  for all $x, y\in X$,
	      \[
		        \frac{1}{q} d(x, x') - Q \le d(f(x), f(x')) \le q d(x,x') + Q.
	      \]

	\item We say that $f$ is a \emph{$(q,Q)$--quasi-isometry} if it is a $(q, Q)$--quasi-isometric embedding such that $Y = N_{Q}(f(X))$.	
\end{enumerate}
\end{defn}

\begin{defn}\label{Def:Quadi-Geodesic} 
A \emph{quasi-geodesic} in a metric space $X$ is a quasi-isometric embedding  
$\alpha: I \to X$ where $I \subset \mathbb{R}$ is a  (possibly infinite) interval.  That is    $\alpha :  I \to X$ is a $(q, Q)$--quasi-geodesic if for all $s, t \in I$, we have 
\[
\frac{|t-s|}{q} - Q  \leq d_X \big(\alpha(s), \alpha(t)\big)  \leq q |s-t| + Q 
\]
\end{defn}

\begin{rem}
We can always assume $\alpha$ is $(2q + 2Q)$–Lipschitz, and hence,
$\alpha$ is continuous. By \cite[Lemma 2.3]{QR24} the Lipschitz assumption can be made
without loss of generality. 
\end{rem}

\subsection*{Notation:} 
Let $o$ be a fixed base-point in  $X$. We use ${\qq}=(q, Q) \in[1, \infty) \times[0, \infty)$ to indicate a pair of constants.
 For instance, one can say $\Phi \colon X \to Y$ is a ${\qq}$--quasi-isometry and $\alpha$ is a ${\qq}$--quasi-geodesic ray or segment. 
\begin{itemize}
    \item  By a \textit{${\qq}$--ray} we mean a ${\qq}$--quasi-geodesic ray $\alpha:[0, \infty) \to X$ such that $\alpha(0)=  o$.
    \item 
If points $x, y \in X$ on the image of $\alpha$ are given, we denote the sub-segment of $\alpha$ connecting $x$ to $y$ by $[x, y]_\alpha$. 
\item For $r>0$, let $B_r^{\circ} \subset X$ be the open ball of radius $r$ centered at $o$, let $B_r$ be the closed ball centered at $o$ and let $B_r^c=X-B_r^{\circ}$. For a $\qq$--ray $\alpha$ and $r>0$, we let $t_r \geq 0$ denote the first time when $\alpha$ first intersects $B_r^c$.

Lastly, if $p$ is a point on a ${\qq}$--ray $\alpha$,  we use $\alpha_{[p, \infty)}$ to denote the tail of $\alpha$ starting from the point $p$.
\end{itemize}

\subsection{QR-Assumptions}
In this section, we briefly review the notion \textit{QR-poset} and \textit{QR-redirecting boundary} from \cite{QR24}.
\begin{defn}\label{Def:Redirection}
Let $X$ be a geodesic metric space.
Let $\alpha, \beta$ and $\gamma$ be quasi-geodesic rays in $X$. We say
\begin{enumerate}
    \item $\gamma$ \emph{eventually coincides with 
$\beta$} if there are times $t_\beta, t_\gamma >0$ such that, 
for $t\geq t_\gamma$, we have 
$
\gamma(t) =\beta(t+t_\beta)
$.

\item For $r>0$, we say $\gamma$ \emph{quasi-redirects $\alpha$ to $\beta$ at radius $r$} if
$
\gamma|_r = \alpha|_r$  and $\beta$ 
eventually coincides with $\gamma$.
 If $\gamma$ is a ${\qq}$--ray, we say \emph{$\alpha$ can be ${\qq}$--quasi-redirected to $\beta$
at radius $r$} or \textit{$\alpha$ can be ${\qq}$--quasi-redirected to $\beta$
by $\gamma$ at radius $r$}. We refer to $t_\gamma$ as the \emph{landing time}. 

\item We say $\alpha$ is \textit{quasi-redirected} to $\beta$, denoted by $\alpha \preceq \beta$, if there is ${\qq} \in [1, \infty) \times[0,\infty)$ such that for every $r>0$, $\alpha$ can be ${\qq}$--quasi-redirected to $\beta$ at radius $r$. 
\end{enumerate}
\end{defn}

\begin{defn}
Define $\alpha \simeq \beta$ if and only if $\alpha  \preceq \beta$ and $\beta  \preceq \alpha$. Then  
$\simeq$ is an equivalence relation on the space of all quasi-geodesic rays in $X$. 

Let $P(X)$
denote the set of all equivalence classes of quasi-geodesic rays under $\simeq$. 
For a quasi-geodesic ray $\alpha$, let $[\alpha] \in P(X)$ denote the equivalence class containing 
$\alpha$. We extend $\preceq$ to $P(X)$ by defining $[\alpha] \preceq [\beta]$ if 
$\alpha \preceq \beta$. Note that this does not depend on the chosen representative
in the given class. The relation $\preceq$ is a partial order on elements of $P(X)$. We call $P(X)$ the \textit{QR-poset} of $X$.
\end{defn}

\underline{QR-Assumption 0:} (No dead ends)

The metric space $X$ is proper and geodesic. 
Furthermore, there exists a pair of constants ${\qq}_0$ such that every point $x \in X$ lies on an infinite ${\qq}_0$--quasi-geodesic ray.  

\underline{QR-Assumption 1:} (Quasi-geodesic representative)

For ${\qq}_0$ as in QR-Assumption 0, every equivalence class of quasi-geodesics $\mathbf{a} \in P(X)$ contains a ${\qq}_0$--ray. 
We fix such a ${\qq}_0$--ray, denote it by $\underline{a} \in \mathbf{a}$, and call it a \emph{central element} of $\mathbf{a}$.

\underline{QR-Assumption 2:} (Uniform redirecting function) 

For every $\mathbf{a} \in P(X)$, there is a function 
\[
f_\mathbf{a} : \, [1, \infty) \times [0, \infty) \to [1, \infty) \times [0, \infty), 
\] 
called the redirecting function of
the class $\mathbf{a}$, such that if $\mathbf{b} \prec \mathbf{a}$ then any ${\qq}$--ray $\beta \in \mathbf{b}$ can be
$f_\mathbf{a}({\qq})$--quasi-redirected to $\underline{a}$. 

\underline{Quasi-redirecting boundary (QR-boundary):}

Once a proper geodesic metric space $X$ satisfies all three QR-Assumptions, there is a ``cone-line'' topology on the poset $P(X)$ described on \cite{QR24}. This poset $P(X)$, when equipped with this topology, is called the \textit{quasi-redirecting boundary} (QR boundary) of $X$ and denoted by $\partial_{*} X$. Since we don't use this topology on $P(X)$ in an essential way in this paper, we refer the reader to \cite{QR24} for the detailed discussion.

A remarkable fact about QR-boundary is the following result.

\begin{thm}[{\cite[Theorem B, Theorem C]{QR24}}]\label{thm:collectfacts}
Let $X, Y$ be proper geodesic metric spaces satisfying all three QR-Assumptions.
\begin{enumerate}
\item A quasi-isometry $f \colon X \to Y$ induces a
homeomorphism between $\partial_{*} X$ and $\partial_{*} Y$.
\item Sublinearly Morse boundaries are topological subspaces of $\partial_{*} X$.
\end{enumerate}
\end{thm}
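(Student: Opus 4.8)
The plan is to prove the two assertions separately, in both cases by transporting the redirection structure through the relevant map and then checking that the transport is compatible with the cone topology. Throughout I write $o_X,o_Y$ for the chosen basepoints, and for part (1) I fix a $(\lambda,\epsilon)$--quasi-isometry $f\colon X\to Y$ together with a coarse inverse $g\colon Y\to X$, so that $g\circ f$ and $f\circ g$ move points a uniformly bounded distance.

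For part (1), the first step is to build a pushforward on rays. Given a $\qq$--ray $\alpha$ in $X$, the composition $f\circ\alpha$ is a quasi-geodesic in $Y$ whose constants depend only on $\qq,\lambda,\epsilon$; after concatenating with a geodesic from $o_Y$ to $f(\alpha(0))$ and reparametrizing by arc length, QR-Assumption 0 and properness let me promote it to an honest $\qq'$--ray $f_\#\alpha$ based at $o_Y$, well defined up to $\simeq$. The crucial step is then to show that $f_\#$ preserves redirection: if $\alpha\preceq\beta$ then $f_\#\alpha\preceq f_\#\beta$, with a redirecting function controlled uniformly in the radius. The content here is that $f$ distorts balls in a controlled way, namely $f(B_r)\subseteq B_{\lambda r+C}$ and $d(f(x),o_Y)\le r$ forces $x\in B_{\lambda(r+C)}$ for a fixed constant $C$, so that a ray which is genuinely identical to $\alpha$ inside $B_r$ and eventually coincides with $\beta$ is sent to one that merely fellow-travels $f_\#\alpha$ on a comparable ball; I then rebuild an honest redirector in $Y$ that is literally identical to $f_\#\alpha$ up to a slightly smaller radius and literally tails onto $f_\#\beta$, using properness and the no-dead-ends hypothesis. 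Applying the same construction to $g$ shows that $f_*\colon P(X)\to P(Y)$, $[\alpha]\mapsto[f_\#\alpha]$, is an order isomorphism with inverse $g_*$, since $g_*f_*$ sends each class to one represented by a bounded-distance ray and hence to itself. Finally I unwind the definition of the cone topology from \cite[Section 5]{QR24}: its basic neighborhoods are built purely from which rays can be redirected into a given one up to each radius, a structure I have just shown $f_*$ and $g_*$ preserve uniformly, so both maps are continuous and $f_*$ is a homeomorphism.

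For part (2), I would fix a sublinear function $\kappa$ and study the natural map $\iota\colon\partial_\kappa X\to P(X)$ sending a $\kappa$--Morse direction to its QR-class. Well-definedness and injectivity both rest on the rigidity of Morse rays in the poset: if $\delta\preceq\beta$ with $\beta$ a $\kappa$--Morse ray, then every redirector realizing this is a quasi-geodesic that begins along $\delta$ and eventually runs along $\beta$, and the Morse property forces such a quasi-geodesic to remain in a $\kappa$--neighborhood of $\beta$; letting the redirection radius tend to infinity pins $\delta$ into the same sublinear direction, so in fact $\beta\preceq\delta$. Consequently $\kappa$--Morse classes are maximal in $P(X)$, and two Morse rays are $\simeq$--equivalent exactly when they are $\kappa$--equivalent, which gives injectivity of $\iota$. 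That $\iota$ is a topological embedding then follows by comparing neighborhood bases, since the defining fellow-traveling-up-to-radius neighborhoods of $\partial_\kappa X$ coincide, for Morse directions, with the cone-topology neighborhoods restricted to the image.

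I expect the real obstacle to be the two places where coarse data must be upgraded to honest data. In part (1) it is the passage from a ray that merely fellow-travels $f_\#\alpha$, obtained as the naive image of a redirector, to one that is genuinely identical to $f_\#\alpha$ inside a ball and genuinely coincides with $f_\#\beta$ on its tail, together with verifying that the resulting redirecting function depends only on the input constants and not on the radius. In part (2) it is extracting from the Morse estimates the clean statement that $\delta\preceq\beta$ with $\beta$ Morse forces equality of sublinear directions, which is where the contracting/Morse inequalities do the work and where care is needed to interchange the ``for every radius'' quantifier in the definition of $\preceq$ with the sublinear limit defining $\partial_\kappa X$.
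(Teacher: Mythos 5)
The first thing to note is that the paper does not prove this statement at all: Theorem~\ref{thm:collectfacts} is quoted verbatim from \cite[Theorems B and C]{QR24} and is used purely as a black box (for quasi-isometry invariance in Lemma~\ref{lem:geometric} and in the reduction arguments of Section~\ref{sec:3-manifold}). So there is no in-paper proof to compare yours against; the relevant comparison is with the arguments of Qing and Rafi themselves.

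Measured against those, your outline identifies the right overall strategy --- push rays forward, show $\preceq$ is preserved with uniform constants, invert via a coarse inverse, and, for the sublinearly Morse part, establish rigidity of Morse classes in $P(X)$ --- but what you have written is a plan rather than a proof, and the two ``obstacles'' you flag at the end are precisely the mathematical content of the cited theorems; neither is carried out. For part (1): once you replace $f\circ\alpha$ by an honest ray $f_\#\alpha$ based at $o_Y$, the naive image of a redirector is no longer literally identical to $f_\#\alpha$ on any ball, nor does it literally coincide with $f_\#\beta$ along its tail; repairing this requires surgery lemmas of exactly the type recalled in this paper as Lemma~\ref{concate} and Lemma~\ref{redirect11}, together with a check that the repaired constants depend only on $(q,Q,\lambda,\epsilon)$ and not on the radius $r$ --- that uniformity is what makes the relation $\preceq$ survive, and you assert it rather than prove it. Your continuity argument is also too quick: the cone topology of \cite[Section 5]{QR24} is built from sets of classes whose central elements can be redirected into a given central element at radius $r$ with constants governed by the redirecting functions $f_{\mathbf{a}}$ of QR-Assumption~2, so proving $f_*$ is a homeomorphism requires tracking how $f_*$ transforms these redirecting functions, not merely the order relation. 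For part (2): the rigidity statement you isolate --- $\delta\preceq\beta$ with $\beta$ $\kappa$-Morse forces $\beta\preceq\delta$ --- is the correct key claim, but your deduction needs the fact that the redirectors' uniform constants $(q,Q)$ feed into a single Morse gauge before the $\kappa$-neighborhood conclusion is legitimate, and the converse implication (that $\kappa$-fellow-traveling rays can actually be redirected to one another, hence are QR-equivalent) is itself a nontrivial surgery argument that you do not supply. In short, the proposal is a faithful reconstruction of the shape of the Qing--Rafi proof, but its load-bearing steps are left as declared difficulties rather than resolved.
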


\subsection{Divergence of groups}
\label{sec:divergence}
In this section, we briefly review  the definition of divergence from \cite{DMS10}.

\begin{defn}
\label{def:equivalentfunction}
Let $\mathcal{F}$ be the collection of all functions from positive reals to positive reals. Let $f$ and $g$ be arbitrary elements of $\mathcal{F}$. The function $f$ is \emph{dominated} by a function $g$, denoted by
\emph{$f\preceq g$}, if there are positive constants $A$, $B$, $C$, $D$ and $E$ such that
\[
  f(x) \leq A\,g(Bx+C)+Dx+E \quad \text{for all $x$.}
\]
Two functions $f$ and $g$ are \emph{equivalent},
denoted by $f\sim g$, if $f\preceq g$ and $g\preceq f$.
\end{defn}

\begin{rem}
The relation $\sim$ is an equivalence relation on the set $\mathcal{F}$. Let $f$ and $g$ be two polynomial functions with degree at least $1$ in $\mathcal{F}$, then it is not hard to show that they are equivalent if and only if they have the same degree. Moreover, all exponential functions of the form $a^{bx+c}$, where $a>1$, $b>0$ are equivalent.
\end{rem}

Since we mainly work on Cayley graphs of finitely generated groups, we assume in this section that our metric spaces are geodesic, proper and periodic spaces.

For such a space $X$, given three points $a, b,c \in X$ and parameters $\delta \in (0,1)$ and $ \gamma \ge 0$ we define \emph{divergence} $\operatorname{div}_{\gamma} (a,b,c ; \delta)$ to be the infimum of lengths of paths that connect $a$ to $b$ outside $B^{o}(c, \delta r - \gamma)$, the open ball around $c$ of radius $\delta r - \gamma$, if this exists. We define it to be infinite otherwise. Here $r = \operatorname{min} \{ d(c, a), d(c, b) \}$. We then define
$$
\operatorname{Div}(n, \delta) = \operatorname{sup}_{a,b,c \in X, d(a,b) \le n } \operatorname{div}(a, b, c; \delta)
$$

Since a space has more than one end then its divergence is infinite, we thus restrict to one-ended spaces.
 Furthermore, we can fix a third point $c = x_0$ in the definition and assume that $a,b$ are in the sphere $S_r : = S(x_0, r)$ and $\operatorname{Div}_{\gamma}(n, \delta)$ can be modified to:
$$
\operatorname{Div}_{\gamma}(n, \delta) = \operatorname{sup}_{a, b \in S(x_0, r)} \operatorname{div}_{\gamma} (a, b, x_0; \delta)
$$

It is shown by \cite{DMS10} that $\operatorname{Div}_{\gamma} (n ,  \delta)$ is independent of $\gamma$ and $\delta$ up to $\sim$ for any $\delta \le 1/2$ and $\gamma \ge 2$ and is invariant under quasi-isometry up to $\sim$. Thus in this paper, we think of $\operatorname{Div}(X)$ as a function of $n$, defining it to be equal to $\operatorname{Div}_{2}(n, 1/2)$. We say that the divergence is linear if $\operatorname{Div}_{2}(n, 1/2) \sim n$, quadratic if $\operatorname{Div}_{2}(n, 1/2) \sim n^2$, and so on.

\section{Quasi-redirecting boundaries of groups with linear divergence}
\label{sec:lineardivergence}
In this section, we are going to prove Theorem~\ref{thm:main1}. To prove Theorem~\ref{thm:main1}, we show that the cayley graph $X$ of $G$ (with respect to a finite generating set), any two quasi-geodesic rays $\alpha$ and $\beta$  are equivalent under the QR-relation. We achieve this by constructing quasi-geodesic rays that direct $\alpha$ to $\beta$ by using the linear divergence property.

We need the following lemmas.

\begin{lem}[{\cite[Lemma 2.6]{QR24}}]
\label{concate}
Let $X$ be a metric space that satisfies QR-Assumption 0. 
 (Nearest-point projection surgery)
Consider a point $x \in X$ and a $(q, Q)$--quasi-geodesic segment $\beta$ 
connecting a point $z \in X$ to a point $w \in X$. Let $y$ be a closest point in $\beta$
to $x$. Then 
\[
\gamma = [x, y] \cup [y, z]_\beta
\] 
is a $(3q, Q)$--quasi-geodesic.
\end{lem}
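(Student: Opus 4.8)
The plan is to reparametrize $\gamma$ explicitly and verify the two quasi-geodesic inequalities of Definition~\ref{Def:Quadi-Geodesic} by splitting into cases according to which of the two pieces the sample points lie on. Write $\ell = d(x,y)$ and parametrize the geodesic piece $[x,y]$ by arc length on $[0,\ell]$, so that $\gamma(0)=x$ and $\gamma(\ell)=y$; here I use that $X$ is geodesic to realize $[x,y]$. Parametrize $\beta$ with $\beta(0)=z$, so that $y=\beta(s_0)$ for some $s_0$, and extend by $\gamma(\ell+u)=\beta(s_0-u)$ for $u\in[0,s_0]$, so that $\gamma$ traverses $[y,z]_\beta$ while retaining $\beta$'s own parameter. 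The domain is then $[0,\ell+s_0]$, and for $t_1\le t_2$ I would bound $d(\gamma(t_1),\gamma(t_2))$.

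When both $t_1,t_2\in[0,\ell]$ the points lie on a geodesic, so $d(\gamma(t_1),\gamma(t_2))=|t_1-t_2|$; when both lie in $[\ell,\ell+s_0]$ the points lie on $\beta$ and their $\beta$-parameters differ by exactly $|t_1-t_2|$, so the $(q,Q)$ bounds for $\beta$ apply verbatim. Both are immediately compatible with the asserted $(3q,Q)$ bounds since $q\ge1$.

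The only real work is the mixed case $t_1\in[0,\ell]$, $t_2\in[\ell,\ell+s_0]$. Set $p=\gamma(t_1)$, $p'=\gamma(t_2)$, $a=d(p,y)=\ell-t_1$ and $b=t_2-\ell$, so $|t_1-t_2|=a+b$. The upper bound is routine: $d(p,p')\le d(p,y)+d(y,p')\le a+(qb+Q)\le q(a+b)+Q$. The lower bound is the crux, and it is exactly here that the nearest-point hypothesis is used. I would extract two independent estimates. First, since $y$ is a closest point of $\beta$ to $x$ and $p'$ lies on $\beta$, we have $d(x,p')\ge d(x,y)=\ell$; combined with $d(x,p')\le d(x,p)+d(p,p')=t_1+d(p,p')$ this gives $d(p,p')\ge \ell-t_1=a$. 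Second, the $(q,Q)$-lower bound along $\beta$ gives $d(y,p')\ge b/q-Q$, whence $d(p,p')\ge d(y,p')-d(p,y)\ge b/q-a-Q$.

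The final step is to combine these so that the constant comes out to exactly $3q$. Adding twice the first estimate to the second yields $3\,d(p,p')\ge a+b/q-Q$, and then $q\ge1$ (so $a\ge a/q$) gives $3\,d(p,p')\ge (a+b)/q-Q$, i.e. $d(p,p')\ge |t_1-t_2|/(3q)-Q$, as required. I expect the main obstacle to be precisely this bookkeeping: the naive triangle-inequality combination of the two estimates only produces a factor $4q$, and it is the weighted combination $2\times(\text{first})+(\text{second})$ together with $q\ge1$ that sharpens it to the claimed $3q$.
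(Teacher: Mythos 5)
Your proof is correct: the paper itself gives no argument for this lemma (it is quoted verbatim from \cite[Lemma 2.6]{QR24}), and your direct verification is complete and sound, with the nearest-point hypothesis entering exactly where it must, namely in the estimate $d(p,p')\ge a$ for the mixed case. The weighted combination $2\times(\text{first estimate})+(\text{second estimate})$, giving $3\,d(p,p')\ge a+b/q-Q$, is precisely the standard trick behind the factor $3q$ in the original reference, so your route agrees in substance with the source; in fact your bounds are slightly stronger than required (error term $Q/3$ rather than $Q$ in the lower bound, and $q$ rather than $3q$ in the upper bound).
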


\begin{lem} \cite[Lemma 2.9]{NQ25}
\label{lem:infinitepoints}
Let $\alpha,\beta$ be quasi-geodesic rays. Suppose there exists constants  $\qq$ and  a sequence of points $\{x_n\}$ on $\alpha$ such that $\operatorname{norm}{x_n} \to \infty$ and the following holds. For every 
  $n$, there exists a $\qq$-ray $\gamma_n$ such that
  $\gamma_n$ eventually coincides with $\beta$, and $\gamma_n$ and $\alpha$ are identical on the subsegment $[o, x_{n}]_{\alpha}$. Then  $\alpha$ can be $\qq$-quasi-redirected to $\beta$.
\end{lem}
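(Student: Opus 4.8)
The plan is to unwind the definition of quasi-redirection and reduce the statement to a single per-radius verification, exploiting the fact that every $\gamma_n$ is a $\qq$-ray for one common constant $\qq$. By Definition~\ref{Def:Redirection}(3), to conclude that $\alpha$ can be $\qq$-quasi-redirected to $\beta$ it suffices to produce, for each radius $r>0$, a $\qq$-ray $\gamma$ with $\gamma|_r = \alpha|_r$ such that $\beta$ eventually coincides with $\gamma$. The family $\{\gamma_n\}$ supplied by the hypothesis will furnish exactly such a ray once the index $n$ is chosen large enough relative to $r$; the uniformity of $\qq$ across the family is what permits a single redirecting constant to serve all radii simultaneously.

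Fix $r>0$. Since $d(o,x_n)\to\infty$, I would choose $n$ so large that $d(o,x_n)\ge r$, and write $x_n=\alpha(s_n)$. The point is that $s_n \ge t_r$, where $t_r$ is the first time $\alpha$ enters $B_r^c$: because $d(o,x_n)\ge r$ we have $\alpha(s_n)\in B_r^c$, and $t_r$ is by definition the \emph{first} such time, so $s_n\ge t_r$. Consequently the subsegment $[o,x_n]_\alpha = \alpha|_{[0,s_n]}$ contains the whole initial arc $\alpha|_{[0,t_r]} = \alpha|_r$. Since $\gamma_n$ and $\alpha$ are identical on $[o,x_n]_\alpha$, we get $\gamma_n(t)=\alpha(t)$ for all $t\le t_r$; in particular $\gamma_n$ and $\alpha$ have the same first exit time $t_r$ from $B_r^\circ$, so $\gamma_n|_r = \alpha|_r$. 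As $\gamma_n$ is a $\qq$-ray that eventually coincides with $\beta$ (equivalently, $\beta$ eventually coincides with $\gamma_n$, the relation being symmetric up to the shift of parametrization in Definition~\ref{Def:Redirection}(1)), the ray $\gamma_n$ quasi-redirects $\alpha$ to $\beta$ at radius $r$. Since $r$ was arbitrary and $\qq$ is independent of $n$, this establishes that $\alpha$ can be $\qq$-quasi-redirected to $\beta$.

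The argument is essentially a matching of definitions, so I do not expect a serious obstacle. The one genuinely delicate point is the passage from \emph{agreement on the subsegment} $[o,x_n]_\alpha$ to the equality $\gamma_n|_r = \alpha|_r$ of initial segments: this rests on the observation that a point $x_n$ lying in $B_r^c$ is reached by $\alpha$ only at or after its first exit time $t_r$, which guarantees both that the overlap region covers all of $\alpha|_r$ and that $\gamma_n$ inherits the same exit time from $B_r^\circ$. Keeping track of parametrizations rather than mere images is the only place where care is needed; everything else is bookkeeping with Definition~\ref{Def:Redirection}.
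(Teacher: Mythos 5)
Your proof is correct, and since the paper does not reproduce a proof of this lemma (it simply cites \cite[Lemma 2.9]{NQ25}), your definition-matching argument is exactly the one the citation presupposes: for each radius $r$ choose $n$ with $d(o,x_n)\ge r$, note that agreement of $\gamma_n$ with $\alpha$ as parametrized paths on $[o,x_n]_\alpha$ forces $\gamma_n|_r=\alpha|_r$ because $\alpha$ first meets $B_r^c$ at a time $t_r\le s_n$, and invoke the uniformity of $\qq$ over $n$ to get a single redirecting constant for all radii. Your parenthetical remark about ``eventually coincides'' is also the right call: the mismatch between Definition~\ref{Def:Redirection}(2) (which literally asks that $\beta$ eventually coincide with $\gamma$) and the lemma's hypothesis (that $\gamma_n$ eventually coincides with $\beta$) is an inconsistency in the paper's transcription of the definition from \cite{QR24}, not a gap in your argument, since both are meant to express that the two rays share a common tail up to a parameter shift.
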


The following lemma follows from the proof of \cite[Lemma 3.5]{QR24}.

\begin{lem} \cite[Lemma 3.5]{QR24}
\label{lem:geodesicrepresentativeunder}
Let $X$ be a proper, geodesic, metric space and let $\alpha$ be a $\qq$--ray. Then there
exists a geodesic ray $\bar{\alpha}$ such that $\bar{\alpha}$ is $(3q, Q)$--quasi-redirected to $\alpha$.
\end{lem}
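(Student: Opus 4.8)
The plan is to straighten $\alpha$ to a geodesic ray $\bar\alpha$ by an Arzel\`a--Ascoli limit, and then, for a sequence of radii tending to infinity, to exhibit $(3q,Q)$--rays that follow $\bar\alpha$ near $o$ and eventually run along $\alpha$. The uniform constant $(3q,Q)$ is meant to come directly from the projection--surgery Lemma~\ref{concate}, while Lemma~\ref{lem:infinitepoints} packages the individual redirections into a single relation $\bar\alpha \preceq \alpha$.

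First I would construct $\bar\alpha$. For each $n$ choose a geodesic segment $\sigma_n$ from $o$ to $\alpha(n)$; since $\alpha$ is a $\qq$--ray we have $d(o,\alpha(n))\to\infty$, so the $\sigma_n$ are genuine segments of unbounded length. Because $X$ is proper, the Arzel\`a--Ascoli theorem yields a subsequence of the $\sigma_n$ converging uniformly on compact sets to a geodesic ray $\bar\alpha$ based at $o$. This is the standard straightening and uses only properness and geodesicity of $X$.

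For the redirection, I would fix a radius, follow $\bar\alpha$ out to the point $x=\bar\alpha(r)$, and splice onto $\alpha$. Taking $y$ to be a closest point of $\alpha$ to $x$, Lemma~\ref{concate} shows that the surgered ray $\delta:=[x,y]\cup\alpha_{[y,\infty)}$ is a $(3q,Q)$--quasi-geodesic that eventually coincides with $\alpha$. The candidate redirecting ray is the concatenation $\gamma_r:=\bar\alpha|_{[0,r]}\cup\delta$, which by construction agrees with $\bar\alpha$ inside $B_r$ and eventually coincides with $\alpha$. Running this along a sequence $r_k\to\infty$, with the marked points $x_k=\bar\alpha(r_k)$ on $\bar\alpha$, and feeding the rays $\gamma_{r_k}$ into Lemma~\ref{lem:infinitepoints}, should yield that $\bar\alpha$ is $(3q,Q)$--quasi-redirected to $\alpha$, as claimed.

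The hard part, and the technical core inherited from the proof of \cite[Lemma 3.5]{QR24}, is verifying that $\gamma_r$ is genuinely a $(3q,Q)$--quasi-geodesic. The outer piece $\delta$ is handled by Lemma~\ref{concate}, and pairs of points on the initial geodesic segment are immediate; the delicate case pairs a point on $\bar\alpha|_{[0,r]}$ with a point far along the $\alpha$--tail, where one must rule out effective backtracking---for instance the degenerate situation in which the closest point $y$ falls near $o$, making $\delta$ double back along $\bar\alpha|_{[0,r]}$---and bound $d(\param,\param)$ below by a definite fraction of the arc length. I expect to control this by comparing distances to the basepoint together with the defining inequality of the nearest--point projection $y$; if the literal nearest point is too close to $o$, I would instead splice at a point of $\alpha$ chosen sufficiently far out, so that the transition segment genuinely leaves $B_r$ and heads outward. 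This estimate is precisely what forces the constant $3q$ and is where essentially all of the work lies.
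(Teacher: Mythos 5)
Note first that the paper does not actually reprove this lemma; it imports it from \cite{QR24}, so the relevant comparison is with the proof of \cite[Lemma 3.5]{QR24}. Your first step (Arzel\`a--Ascoli limit of the geodesics $[o,\alpha(n)]$) and your packaging tools (Lemma~\ref{concate}, Lemma~\ref{lem:infinitepoints}) are the right skeleton. The problem is that the step you defer --- ``verifying that $\gamma_r$ is genuinely a $(3q,Q)$--quasi-geodesic'' --- is not a technical verification of your construction; your construction is false as stated, and the patches you sketch do not repair it. Concretely, in the Euclidean plane let $\alpha$ run up the $y$-axis to $(0,n)$, across to $(n,n)$, diagonally back to $(n/2,n/2)$, then out to $(2n,0)$ and along the positive $x$-axis; this is a $(q,Q)$--ray with constants independent of $n$, and the limit ray $\bar\alpha$ is the positive $x$-axis. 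For $r=n/10$ the closest point of $\alpha$ to $x=\bar\alpha(r)$ is the basepoint $o$ itself, so your $\gamma_r=\bar\alpha|_{[0,r]}\cup[x,o]\cup\alpha$ visits $o$ at two parameters differing by $2r$; it is not a $(3q,Q)$--quasi-geodesic once $r \gg qQ$, and no manipulation of ``distances to the basepoint'' or of the projection inequality can help, because the path itself genuinely doubles back. So at such radii the nearest-point splice must simply not be used, and the entire content of the lemma is to produce, for arbitrarily large radii, splice points where some surgery does yield uniform constants.

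The deeper sign of the gap is that your redirection argument uses only that $\bar\alpha$ is a geodesic ray based at $o$: nowhere after the Arzel\`a--Ascoli step do you invoke the fact that $\bar\alpha$ is a limit of the geodesics $[o,\alpha(n)]$. But the statement ``every geodesic ray can be $(3q,Q)$--quasi-redirected to $\alpha$'' is false: in $\mathbb{H}^2$, take $\alpha$ a geodesic ray and $\bar\alpha$ the opposite ray; any $(q',Q')$--ray that eventually coincides with $\alpha$ stays within bounded (Morse) distance of $\alpha$, hence cannot pass through $\bar\alpha(r)$ for large $r$, so no uniform constants exist. Therefore any correct proof must use the limit construction in the splicing step itself --- for instance, the near-additivity $d(\bar\alpha(r),\alpha(n_k)) \le d(o,\alpha(n_k)) - r + o(1)$, which makes $\bar\alpha|_{[0,r]}\cup[\bar\alpha(r),\alpha(n_k)]$ an almost-geodesic --- and must then still deal with the possibility that the tail $\alpha_{[n_k,\infty)}$ re-enters a neighborhood of that almost-geodesic (your fallback ``splice at a point of $\alpha$ chosen sufficiently far out'' fails for exactly this reason unless the target is chosen carefully). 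That is precisely the work carried out in the cited proof in \cite{QR24} and deferred in your proposal, so what you have is a plausible outline whose decisive estimate is missing, not a proof.
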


\begin{lem} \cite[Lemma 3.3]{Tra19}]
\label{ll2}
For each $C>1$ and $\rho \in (0,1]$ there is a constant $L=L(C,\rho)\geq 1$ such that the following holds. Let $r$ be an arbitrary positive number and $\gamma$ a  path with the length  $\ell(\gamma) <Cr$ and $d(x,y) >r$. Then there is an $(L,0)$--quasi-geodesic $\alpha$ connecting two points $x$, $y$ such that the image of $\alpha$ lies in the $\rho r$--neighborhood of $\gamma$ and $\ell(\alpha) < \ell(\gamma)$.
\end{lem}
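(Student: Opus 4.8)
The plan is to follow the standard \emph{shortcutting} technique: replace the path $\gamma$ by a piecewise-geodesic running along a coarse net of points selected on $\gamma$, and then verify that this piecewise-geodesic is an honest $(L,0)$--quasi-geodesic with $L$ depending only on $C$ and $\rho$. First I would parametrize $\gamma \colon [0,\ell] \to X$ by arc length, so $\ell = \ell(\gamma) < Cr$, and select times $0 = t_0 < t_1 < \cdots < t_n = \ell$ greedily by setting $t_{i+1} = \sup\{\, t \in [t_i,\ell] : d(\gamma(t_i),\gamma(t)) \le \rho r\,\}$ and stopping once this supremum reaches $\ell$ (absorbing the final short segment into its predecessor if it is too short). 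Write $p_i = \gamma(t_i)$, so $p_0 = x$ and $p_n = y$. By continuity $d(p_i,p_{i+1}) = \rho r$ for $i < n-1$, while the maximality in the definition of $t_{i+1}$ forces $d(p_i,p_j) > \rho r$ whenever $j \ge i+2$. Summing consecutive distances gives $(n-1)\rho r \le \ell(\gamma) < Cr$, so the number of net points is bounded: $n < C/\rho + 1$.

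The key point is that these two facts combine to make the net an efficient quasi-isometric embedding. Since consecutive gaps have length $\rho r$, we get the upper bound $d(p_i,p_j) \le \rho r\,|i-j|$; since non-consecutive points are more than $\rho r$ apart and $|i-j| \le n < C/\rho + 1$, we get the matching lower bound $d(p_i,p_j) \ge \tfrac{\rho}{C+\rho}\,\rho r\,|i-j|$. In other words, the map $i \mapsto p_i$ is a $(K,0)$--quasi-isometric embedding of $\{0,\rho r,2\rho r,\dots\}$ into $X$, with multiplicative constant $K = K(C,\rho)$ independent of $r$. I would then define $\alpha$ to be the concatenation of geodesic segments $[p_i,p_{i+1}]$. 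The two geometric conclusions are immediate: each segment has length $d(p_i,p_{i+1}) \le \rho r$ and begins at $p_i \in \gamma$, so the image of $\alpha$ lies in the $\rho r$--neighborhood of $\gamma$; and $\ell(\alpha) = \sum_i d(p_i,p_{i+1}) \le \sum_i (t_{i+1}-t_i) = \ell(\gamma)$.

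The main obstacle is upgrading this concatenation to a genuine $(L,0)$--quasi-geodesic, that is, eliminating the additive constant. The naive concatenation can fail this, because two consecutive geodesic segments may fold back along a shared initial subsegment, producing distinct parameters with the same image; however, the separation $d(p_i,p_{i+2}) > \rho r$ bounds any such shared fold by $\rho r/2$, so I would pull the path taut at each junction (rerouting $p_i \to w \to p_{i+2}$ past any shared corner $w$), which only shortens $\alpha$ and keeps it inside the same $\rho r$--neighborhood. After tightening, the $(K,0)$--quasi-isometry of the net, together with the \emph{bounded} number $n$ of segments, is what converts into the local lower bound needed for $\alpha$ to be an $(L,0)$--quasi-geodesic, exactly as in \cite[Lemma 3.3]{Tra19}; verifying that this tightening terminates and yields a clean constant $L = L(C,\rho)$ is the technical heart of the argument. (Here $\ell(\alpha) < \ell(\gamma)$ is strict unless $\gamma$ is already a geodesic, in which case one simply takes $\alpha = \gamma$.)
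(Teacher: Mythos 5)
A preliminary remark: the paper contains no proof of this statement --- it is imported verbatim from \cite[Lemma 3.3]{Tra19} --- so a blind proof must be self-contained and cannot lean on that citation. Your netting step is correct, and it is the standard way such arguments begin: the last-exit greedy choice gives $d(p_i,p_{i+1})=\rho r$, the separation $d(p_i,p_j)>\rho r$ for $|i-j|\ge 2$, the cardinality bound $n<C/\rho+1$, containment of the broken geodesic in the $\rho r$--neighborhood of $\gamma$, and $\ell(\alpha)\le\ell(\gamma)$. The gap is exactly at what you call the ``technical heart,'' and your proposed fix does not close it. Folding along a shared subsegment is only the degenerate failure mode; in a general geodesic space the concatenation can be an \emph{embedded} arc and still fail to be an $(L,0)$--quasi-geodesic for every uniform $L$. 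Concretely, consider the metric graph consisting of a segment $A=[p_i,p_{i+1}]$ of length $\rho r$, a segment $B=[p_{i+1},p_{i+2}]$ of length $\rho r$, an edge of length $\epsilon\ll\rho r$ joining the midpoint $m_A$ of $A$ to the midpoint $m_B$ of $B$, and a disjoint arc $\Gamma$ of length $1.4\rho r$ from $p_{i+1}$ to $p_{i+2}$ along which $\gamma$ travels (embed this local picture into a longer path so that $d(x,y)>r$ and $\ell(\gamma)<Cr$). One checks that $A$ and $B$ are still the unique geodesics between their endpoints, that $d(p_i,p_{i+2})=\rho r+\epsilon>\rho r$, and that every point of $\Gamma$ after $p_{i+1}$ lies outside $B(p_i,\rho r)$; hence the greedy net produces exactly the points $p_i,p_{i+1},p_{i+2}$ and your $\alpha$ contains $A\cup B$. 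Yet $d(m_A,m_B)=\epsilon$ while $m_A$ and $m_B$ are at parameter distance $\rho r$ along $\alpha$, so $\alpha$ is not an $(L,0)$--quasi-geodesic for any $L$ independent of $\epsilon$, and there is no shared corner for your tightening move to cut, since $A\cup B$ is injective. The net estimates by themselves only yield a quasi-geodesic with additive error comparable to $\rho r$, the scale of the net.

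Eliminating that additive error is precisely the content of the lemma, and it matters for this paper: in Claim~1 of the proof of Theorem~\ref{thm:main1}, the path $\sigma$ produced by this lemma is fed into the Annulus Surgery Lemma~\ref{lem:keylemma} with \emph{fixed} constants $(q_2,Q_2)=(L,0)$, uniformly over the radii $s=s_i\to\infty$; an additive constant growing like $\rho s$ would make the output constant $M$ blow up with $s$ and break the subsequent application of Lemma~\ref{lem:infinitepoints}, which requires one pair of constants for all radii. A complete proof therefore needs a genuine shortcutting argument on top of the net: for instance, repeatedly replace subarcs $[u,v]_\alpha$ whose endpoints satisfy $d(u,v)\ll\ell([u,v]_\alpha)$ by geodesics (working at scale $\rho r/2$ from the start so these insertions stay in the $\rho r$--neighborhood), and prove that this process terminates in a path whose multiplicative constant depends only on $C$ and $\rho$. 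That argument --- the actual substance of \cite[Lemma 3.3]{Tra19} --- is what is missing; writing ``exactly as in \cite{Tra19}'' at that point makes the proposal circular as a blind proof.
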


\begin{lem}[Annulus Surgery]
\label{lem:keylemma}
Let $X$ be a proper geodesic metric space. Given $\delta > \epsilon >0$ and a constant $C >0$. Given two pairs of constants $(q_1, Q1), (q_2, Q_2) \in [1, \infty) \times [0, \infty)$. Then there exists a constant $M = M(\delta, \epsilon, q_1, Q_1, q_2, Q_2, C)$ such that the following holds. 

Let $\alpha$ be a $(q_1, Q_1)$--quasi-geodesic based at $o$ with $\alpha_{+}$ in the sphere  $S_{\epsilon r} : =  S(o, \epsilon r)$ and lies entirely in the ball $B_{\epsilon r}$ with $r > 1$. Let $\zeta$ be a geodesic ray based at $o$ and passes through $\alpha_{+}$. Let $p$ denote the intersection point of $\zeta$ with the sphere $S_{\delta r}$. Let $\gamma$ be a $(q_2, Q_2)$--quasi-geodesic based at $p$ such that $\beta$ lies entirely outside the open ball $B^{o}_{\delta r}$ and $\ell(\beta) \le Cr$. Then the concatenation $\sigma : = \alpha \cup [\alpha_{+}, p] \cup \beta$ is a $(M, M)$--quasi-geodesic.
\end{lem}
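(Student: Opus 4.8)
The plan is to reparametrize $\sigma$ so that each of its three pieces keeps its own parameter, and then to verify the two quasi-geodesic inequalities separately. Write $\alpha\colon[0,T_1]\to X$ with $\alpha(T_1)=\alpha_+$, parametrize the middle geodesic segment $[\alpha_+,p]$ by arc length on $[T_1,T_2]$ with $T_2=T_1+(\delta-\epsilon)r$ (so that $d(o,\sigma(t))=t-T_1+\epsilon r$ there), and attach $\beta\colon[0,S]\to X$ on $[T_2,T_3]$ with $T_3=T_2+S$. The resulting map is continuous because $\alpha$ ends and $\beta$ starts at the endpoints $\alpha_+,p$ of the middle segment. The one preliminary estimate I would record is that the whole domain is linear in $r$: from $\tfrac{T_1}{q_1}-Q_1\le d(o,\alpha_+)=\epsilon r$ and $\tfrac{S}{q_2}-Q_2\le \ell(\beta)\le Cr$ we obtain $T_3\le Ar+B$, where $A=q_1\epsilon+(\delta-\epsilon)+q_2C$ and $B=q_1Q_1+q_2Q_2$ depend only on the admissible data.

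The upper inequality $d(\sigma(s),\sigma(t))\le M|t-s|+M$ is routine: if $s<t$ lie on a common piece it is that piece's own upper bound, and otherwise one telescopes through the junction points $\alpha_+$ and $p$, noting that the middle segment contributes exactly $T_2-T_1=d(\alpha_+,p)$ on both sides. The content is the lower inequality $d(\sigma(s),\sigma(t))\ge \tfrac1M|t-s|-M$, and here I would exploit that the three pieces are radially separated. Writing $|x|:=d(o,x)$, the first piece satisfies $|x|\le\epsilon r$, the middle one increases $|x|$ monotonically from $\epsilon r$ to $\delta r$, and the third satisfies $|x|\ge\delta r$; the basic tool throughout is the reverse triangle inequality $d(x,y)\ge\big||x|-|y|\big|$.

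When $\sigma(s),\sigma(t)$ lie on the same piece the lower bound is again that piece's own inequality. For the two \emph{adjacent} cross-piece cases (piece $1$ with piece $2$, and piece $2$ with piece $3$) I would prove the bound by \emph{adding two inequalities}: the radial estimate $d(x,y)\ge\big||x|-|y|\big|$, which accounts for the part of $|t-s|$ that lives on the middle segment, together with the within-piece quasi-geodesic inequality for $\alpha$ (respectively $\beta$), which accounts for the remaining part $T_1-s$ (respectively the $\beta$-parameter of $\sigma(t)$). Feeding one estimate into the triangle inequality through $\alpha_+$ (respectively $p$) makes the awkward cross terms cancel after summation, and yields $d(x,y)\ge c\,|t-s|-c'$ with $c,c'$ depending only on $q_1,Q_1,q_2,Q_2$.

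The remaining and decisive case is $\sigma(s)$ on piece $1$ and $\sigma(t)$ on piece $3$, where telescoping fails because the intermediate distances are themselves of order $r$. Here I would simply compare two quantities that are both linear in $r$: the radial tool gives $d(x,y)\ge|y|-|x|\ge\delta r-\epsilon r=(\delta-\epsilon)r$, while $|t-s|\le T_3\le Ar+B$ by the preliminary step, so $r\ge (|t-s|-B)/A$ and hence $d(x,y)\ge \tfrac{\delta-\epsilon}{A}|t-s|-\tfrac{(\delta-\epsilon)B}{A}$. This is the single place where the hypotheses $\ell(\beta)\le Cr$ and $\alpha\subset B_{\epsilon r}$ are essential: they are exactly what keeps $|t-s|$ linear in $r$ so that it can be absorbed by the radial gap, and without a length bound on $\beta$ the concatenation need not be a quasi-geodesic at all. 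Taking $M$ to be the maximum of the finitely many constants produced above gives $M=M(\delta,\epsilon,q_1,Q_1,q_2,Q_2,C)$. I expect the only real work to be this non-adjacent case and the bookkeeping of constants; the conceptual mechanism is just radial monotonicity together with the within-piece bounds.
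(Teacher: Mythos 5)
Your proof is correct, and its decisive step is exactly the paper's: when $\sigma(s)$ lies on $\alpha$ and $\sigma(t)$ lies on $\beta$, the radial gap $d(\sigma(s),\sigma(t))\ge(\delta-\epsilon)r$ across the annulus absorbs the linear-in-$r$ bound coming from $\alpha\subset B_{\epsilon r}$ and $\ell(\beta)\le Cr$. The paper phrases this as a bound on the ratio $\ell([x,y]_{\sigma})/d(x,y)$ while you phrase it in terms of the glued parameter, but it is the same mechanism. Where you genuinely diverge is in the adjacent cases: the paper dispatches $\alpha\cup[\alpha_+,p]$ and $[\alpha_+,p]\cup\beta$ by citing the nearest-point projection surgery lemma (Lemma~\ref{concate}), which applies because $\alpha_+$ is a nearest point of $\alpha$ to $p$ and $p$ is a nearest point of $\beta$ to $\alpha_+$ (both forced by the radial setup, though the paper does not verify this), whereas you re-prove these cases by hand via radial monotonicity of the middle segment plus the within-piece lower bounds, with a small weighted summation to recover $|t-s|$. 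Your route is self-contained and more careful about parametrization (the paper's length-ratio criterion tacitly presumes an arc-length-type parametrization); the paper's is shorter. One quibble: your closing claim that without a length bound on $\beta$ ``the concatenation need not be a quasi-geodesic at all'' is overstated. Since $\beta$ is a $(q_2,Q_2)$--quasi-geodesic, a point $\beta(u)$ with $u\gg r$ is automatically far from $p$, hence far from all of $\alpha\cup[\alpha_+,p]$, and a short extra case analysis shows the lemma holds with $M$ independent of $C$; the hypothesis $\ell(\beta)\le Cr$ is essential for the simple two-line treatment of the far case (yours and the paper's alike), not for the truth of the statement.
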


\begin{figure}[htb]
\centering 
 \def\svgwidth{0.5\textwidth}
\begingroup%
  \makeatletter%
  \providecommand\color[2][]{%
    \errmessage{(Inkscape) Color is used for the text in Inkscape, but the package 'color.sty' is not loaded}%
    \renewcommand\color[2][]{}%
  }%
  \providecommand\transparent[1]{%
    \errmessage{(Inkscape) Transparency is used (non-zero) for the text in Inkscape, but the package 'transparent.sty' is not loaded}%
    \renewcommand\transparent[1]{}%
  }%
  \providecommand\rotatebox[2]{#2}%
  \newcommand*\fsize{\dimexpr\f@size pt\relax}%
  \newcommand*\lineheight[1]{\fontsize{\fsize}{#1\fsize}\selectfont}%
  \ifx\svgwidth\undefined%
    \setlength{\unitlength}{484.33504334bp}%
    \ifx\svgscale\undefined%
      \relax%
    \else%
      \setlength{\unitlength}{\unitlength * \real{\svgscale}}%
    \fi%
  \else%
    \setlength{\unitlength}{\svgwidth}%
  \fi%
  \global\let\svgwidth\undefined%
  \global\let\svgscale\undefined%
  \makeatother%
  \begin{picture}(1,0.93551711)%
    \lineheight{1}%
    \setlength\tabcolsep{0pt}%
    \put(0.03223306,0.76828072){\color[rgb]{0,0,0}\makebox(0,0)[lt]{\lineheight{1.25}\smash{\begin{tabular}[t]{l}$\gamma$\end{tabular}}}}%
    \put(0,0){\includegraphics[width=\unitlength,page=1]{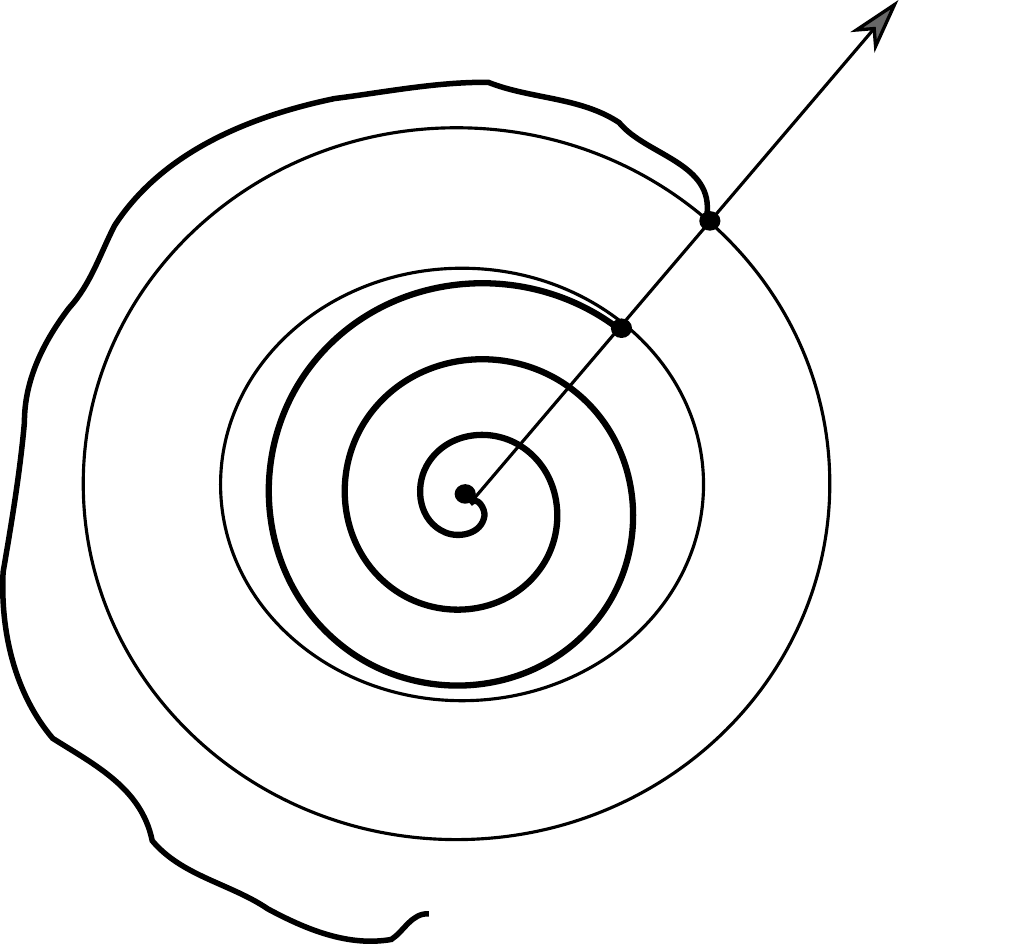}}%
    \put(0.84143827,0.83269976){\color[rgb]{0,0,0}\makebox(0,0)[lt]{\lineheight{1.25}\smash{\begin{tabular}[t]{l}$\zeta$\end{tabular}}}}%
    \put(0.56143998,0.4447692){\color[rgb]{0,0,0}\makebox(0,0)[lt]{\lineheight{1.25}\smash{\begin{tabular}[t]{l}$\alpha$\end{tabular}}}}%
    \put(0.55049029,0.06465986){\color[rgb]{0,0,0}\makebox(0,0)[lt]{\lineheight{1.25}\smash{\begin{tabular}[t]{l}$S_{\delta r}$\end{tabular}}}}%
    \put(0.32211184,0.20387682){\color[rgb]{0,0,0}\makebox(0,0)[lt]{\lineheight{1.25}\smash{\begin{tabular}[t]{l}$S_{\epsilon r}$\end{tabular}}}}%
    \put(0.71786363,0.69661124){\color[rgb]{0,0,0}\makebox(0,0)[lt]{\lineheight{1.25}\smash{\begin{tabular}[t]{l}$p$\end{tabular}}}}%
    \put(0.53171942,0.59180743){\color[rgb]{0,0,0}\makebox(0,0)[lt]{\lineheight{1.25}\smash{\begin{tabular}[t]{l}$\alpha_{+}$\end{tabular}}}}%
  \end{picture}%
\endgroup%

\caption{ The cacatenation $\alpha \cup [\alpha_{+}, p] \cup \gamma$ is a quasi-geodesic.
}\label{annulus}
\end{figure}

\begin{proof}
To see this, for every $x \neq y \in \sigma$, we are going to show that the ratio $$\frac{\ell([x,y]_{\sigma})} { d(x,y)}$$ is bounded above by a uniform constant.

According to Lemma~\ref{concate}, the concatenations $$\alpha \cup [\alpha_{+}, p], [\alpha_{+}, p] \cup \gamma$$ are $(3q_1, Q_1)$--quasi-geodesic and $(3q_2, Q_2)$--quasi-geodesic respectively.

We thus only need to consider the case $x \in \alpha$ and $y \in \gamma$. On a one hand, we have

\begin{align*}
    \ell([x,y]_{\sigma}) &\le \ell (\alpha) + d(\alpha_{+}, p) + \ell (\gamma) \\
    &\le q_{1} d(o, \alpha_{+}) + Q_1 + (\delta - \epsilon) r + Cr \\
    &\le (q_1 + \delta -\epsilon + C)r + Q_1 < (q_1 + \delta - \epsilon + C + Q_1) r
\end{align*}

On the other hand, since $x, y$ lie outside the annulus $B_{\delta r} \backslash B^{o}_{\epsilon r}$, we have
$$
d(x, y) \ge (\delta - \epsilon) r
$$
Thus we have
$$
\frac{\ell([x,y]_{\sigma})}{d(x,y)} \le \frac{(q_1 + Q_1 + C + \delta - \epsilon)r}{(\delta - \epsilon) r} = \frac{q_1 + Q_1 + C + \delta - \epsilon}{\delta - \epsilon }
$$
Combining with cases $x , y \in \alpha \cup [\alpha_{+}, p]$ , $x , y \in  [\alpha_{+}, p] \cup \gamma]$ (which are $(3q_1, Q_1)$ and $(3q_2, Q_2)$--quasi-geodesics respectively), there is a constant $M$ depending only on constants $q_1, Q_1, q_2, Q_2, \delta, \epsilon, C$ so that $\sigma$ is a $(M, M)$--quasi-geodesic.
\end{proof}

The following lemma is a slight modification of \cite[Lemma 4.3]{QRT22}. We include it here for completeness.

\begin{lem} 
    \label{redirect11} (Quasi-geodesic ray to geodesic ray surgery) Let $\epsilon \in (0,1)$. Let $\beta$ be a geodesic ray and 
$\gamma$ be a $(q, Q)$--ray. Suppose that there exists an increasing sequence $\{r_n\}$ such that  for every $n$,  $d(\beta (r_n), \gamma)\leq \epsilon r_n$. Then $\gamma$ can be $(9q,Q)$--quasi-redirected to the geodesic ray $\beta$.
\end{lem}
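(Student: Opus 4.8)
The plan is to apply the ``sequence of redirections'' criterion of Lemma~\ref{lem:infinitepoints} with $\alpha = \gamma$: I will produce, for each $n$, a single $(9q,Q)$--ray $\eta_n$ that is identical to $\gamma$ along a long initial subsegment and eventually coincides with $\beta$, with the endpoints of these subsegments escaping to infinity along $\gamma$. Granting that each $\eta_n$ is a $(9q,Q)$--quasi-geodesic, Lemma~\ref{lem:infinitepoints} then yields directly that $\gamma$ can be $(9q,Q)$--quasi-redirected to $\beta$.

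First I fix the jump points. For each $n$, let $y_n$ be a nearest point of $\gamma$ to $\beta(r_n)$, so that $d(y_n,\beta(r_n)) \le \epsilon r_n$. Because $\beta$ is a geodesic ray based at $o$ we have $d(o,\beta(r_n)) = r_n$, and hence $d(o,y_n) \ge (1-\epsilon) r_n$. Since $\epsilon < 1$ and $r_n \to \infty$, the points $y_n$ escape every bounded set, which supplies the escaping subsegment endpoints required by Lemma~\ref{lem:infinitepoints}. To define the redirecting ray, let $w_n = \beta(s_n)$ be a nearest point of the geodesic $\beta$ to $y_n$; then $d(y_n,w_n) \le d(y_n,\beta(r_n)) \le \epsilon r_n$ and $|s_n - r_n| \le 2\epsilon r_n$, so both $s_n$ and $d(o,y_n)$ are comparable to $r_n$. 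Set $\eta_n := \gamma_{[o,y_n]} \cup [y_n,w_n] \cup \beta_{[s_n,\infty)}$. By construction $\eta_n$ agrees with $\gamma$ on $[o,y_n]_\gamma$ and eventually coincides with $\beta$, exactly as needed.

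It remains to check that $\eta_n$ is a $(9q,Q)$--quasi-geodesic, uniformly in $n$; this is the crux. The two corners are resolved by the nearest-point surgery of Lemma~\ref{concate}: applied to the geodesic $\beta$ and the point $y_n$ it shows that $[y_n,w_n]\cup\beta_{[s_n,\infty)}$ is a $(3,0)$--quasi-geodesic, and applied to the quasi-geodesic $\gamma$ it shows that the initial piece $\gamma_{[o,y_n]}\cup[y_n,w_n]$ is a $(3q,Q)$--quasi-geodesic, the two surgeries combining to the constant $9q$. The one genuinely new case, and the main obstacle, is a pair consisting of a point $u\in\gamma_{[o,y_n]}$ and a point $v$ on the tail $\beta_{[s_n,\infty)}$: the $\eta_n$--length from $u$ to $v$ equals $\ell_\gamma(u,y_n) + d(y_n,w_n) + d(w_n,v)$, and one must bound it by a uniform multiple of $d(u,v)$. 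Here it is essential that $w_n$ is the \emph{nearest-point projection} of $y_n$ onto the geodesic $\beta$: this forces $d(y_n,\beta(s)) \ge d(y_n,w_n)$ for every $s$, so the tail of $\beta$ cannot fold back toward the jump point, while $d(o,\beta(s)) = s$ forces the far part of the tail to leave the region of radius comparable to $r_n$ occupied by $\gamma_{[o,y_n]}$. Combining these two effects with the quasi-geodesic inequality for $\gamma$ along $[u,y_n]_\gamma$ produces the required linear lower bound on $d(u,v)$.

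I expect this cross-term to be the delicate point, and it is precisely why the construction jumps to the projection $w_n$ rather than to an arbitrary point of $\beta$: for a naive jump target the concatenation can fail to be a quasi-geodesic in a general proper geodesic space, and it is the combination of the closest-point property of $w_n$ with the radial control $d(o,\beta(s)) = s$ coming from $\beta$ being geodesic that rescues the estimate. Once each $\eta_n$ is shown to be a $(9q,Q)$--quasi-geodesic, Lemma~\ref{lem:infinitepoints}, applied with the points $\{y_n\}$ and the rays $\{\eta_n\}$, concludes that $\gamma$ can be $(9q,Q)$--quasi-redirected to $\beta$.
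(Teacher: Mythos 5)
Your overall strategy (two nearest-point surgeries plus Lemma~\ref{lem:infinitepoints}) is the same as the paper's, and the corner at $w_n$ is handled correctly, but there is a genuine gap at the \emph{first} corner of $\eta_n$. Lemma~\ref{concate} applies to a concatenation $[x,y]\cup[y,z]_\beta$ only when the junction $y$ is a nearest point of the quasi-geodesic to the outside point $x$. At $y_n$ this hypothesis fails: $y_n$ is the nearest point of $\gamma$ to $\beta(r_n)$, not the nearest point of $\gamma_{[o,y_n]}$ to $w_n$, so your claim that $\gamma_{[o,y_n]}\cup[y_n,w_n]$ is a $(3q,Q)$--quasi-geodesic is unjustified, and it is false in general: the geodesic $[y_n,w_n]$ can double back along $\gamma$. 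Concretely, let $X$ be the metric graph obtained from the ray $\beta=[0,\infty)$ by attaching, for a fast-growing sequence $r_n\to\infty$, a vertex $y_n$ joined to $\beta(r_n/2)$ by an arc $A_n$ of length $r_n/4$, to $\beta(r_n)$ by an arc $B_n$ of length $3r_n/8$, and to $\beta(10r_n)$ by an arc $C_n$ of length $9.5\,r_n$; one checks $\beta$ remains geodesic. Let $\gamma$ follow $\beta$ to $\beta(r_n/2)$, climb $A_n$ to $y_n$, leave along $C_n$ to $\beta(10r_n)$, and repeat at the next stage; this is a quasi-geodesic ray with constants independent of $n$. Then $d(\beta(r_n),\gamma)=3r_n/8$ is realized uniquely at $y_n$ (the arc $B_n$ beats every point of $A_n$, $C_n$ and $\beta\cap\gamma$), so the hypothesis of the lemma holds with $\epsilon=3/8$ and your construction must take $y_n$ as the jump point. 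But the projection of $y_n$ to $\beta$ is $w_n=\beta(r_n/2)$, which already lies on $\gamma_{[o,y_n]}$, and the unique geodesic $[y_n,w_n]$ is $A_n$ itself. Hence $\eta_n$ climbs $A_n$ and immediately descends it again: it passes through $w_n$ twice with parameter gap about $r_n/2$, so it is not a $(q',Q')$--quasi-geodesic for any constants independent of $n$. Your sketched cross-term estimate cannot be repaired, because the two facts you invoke ($d(y_n,\beta(s))\ge d(y_n,w_n)$ and $d(o,\beta(s))=s$) control the tail of $\beta$ relative to $y_n$, whereas the failure comes from the initial segment of $\gamma$ relative to the jump segment $[y_n,w_n]$.

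The paper's proof avoids exactly this trap by reversing the direction of the projection at the first corner and pushing the landing point outward. It sets $q_n$ (your $y_n$), chooses $R_n$ so large that the ball $B_{R_n}$ contains the whole initial segment $[o,q_n]_\gamma$, and jumps at $q'_n$, defined as the nearest point of $[o,q_n]_\gamma$ to $\beta(R_n)$; this makes the first corner a genuine projection corner, so Lemma~\ref{concate} applies. Moreover, since $\zeta=[o,q'_n]_\gamma\cup[q'_n,\beta(R_n)]$ stays inside $B_{R_n}$ while every tail point $\beta(t)$, $t\ge R_n$, satisfies $d(\beta(t),z)\ge t-R_n\ge d(\beta(t),\beta(R_n))$ for $z\in B_{R_n}$, every tail point projects to $\beta(R_n)$, so the second corner is also a projection corner; this is precisely what kills the doubling-back phenomenon above (in the example, the paper's redirecting ray essentially just follows $\beta$). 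The estimate $d(o,q'_n)\ge(1-\epsilon)r_n$ then shows the redirection radii go to infinity, and Lemma~\ref{lem:infinitepoints} concludes, as in your outline. So the fix is not a finer cross-term estimate but a different choice of jump point: project a far point of $\beta$ onto the initial segment of $\gamma$, rather than projecting $\gamma$'s closest point onto $\beta$.
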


\begin{proof}
    Let $q_n$ be a point in $\gamma$  that is closest to $\beta(r_n)$ and let $R_n >0$ be such that the ball of radius
$R_n$ centered at $o$ contains $[o, q_n]_{\gamma}$. Now let $q'_n$ be the point in $[o, q_n]_{\gamma}$ closest to $\beta(R_n)$. Then
\begin{align*}
    d(o, q'_n) &\ge d(o, \beta(R_n)) - d(\beta(R_n), q'_n) = R_n - d(\beta(R_n), q_n) \\
    &\ge R_n - ( d(\beta(R_n), \beta(r_n)) + d(\beta(r_n), q_n)) \\
    &\ge R_n - (R_n -r_n) - \epsilon r_n = (1 - \epsilon) r_n 
\end{align*}
By Lemma~\ref{concate}, the concatenation $\zeta: = [o, q'_n]_{\gamma} \cup [q'_n, \beta(R_n)]$ is a $(3q, Q)$--quasi-geodesic. Furthermore, $d(o, q'_n) \le R$, it follows that the projection of any point on the geodesic $\beta ([R_n, \infty))$ to $\zeta$ is the point $\beta(R_n)$. By Lemma~\ref{concate} again, the concatenation $\xi : = \zeta \cup \beta ([R_n, \infty)$  is  a $(9q, Q)$--quasi-geodesic. Since $d(o, q'_n) \ge (1 - \epsilon) r_n$, it follows that $\xi$ is identical with $\gamma$ in the open ball $B^{o}_{(1 -\epsilon) r_n}$. In other words, $\gamma$ can be $(9q, Q)$--quasi-redirected to the geodesic ray $\beta$ at the radius $r'_n : = (1- \epsilon) r_n$. As $r'_n \to \infty$, it follows from Lemma~\ref{lem:infinitepoints} that $\gamma$ can be $(9q, Q)$--quasi-redirected to $\beta$.  The lemma is proved.
\end{proof}

We are now ready for the proof of Theorem~\ref{thm:main1}.

\begin{proof}[{Proof of Theorem~\ref{thm:main1}}]
Fix a finite generating set for $G$, and let $X$ the Cayley graph of $G$ with respect to this generating set.
We aim to prove that the poset $P(X)$ consists of exactly one point and satisfies all three QR-Assumptions.

Consider two $\qq$-rays $\alpha$ and $\beta$ in $X$, both based at a vertex $o$. By Lemma~\ref{lem:geodesicrepresentativeunder}, there exist geodesic rays $\bar{\alpha}$ and $\bar{\beta}$ in $X$ such that $\bar{\alpha} \preceq \alpha$ and $\bar{\beta} \preceq \beta$. We will show that $\alpha \preceq \bar{\beta}$, which implies $\alpha \preceq \beta$. By a symmetric argument, we can also establish $\beta \preceq \alpha$, thus proving $\alpha \sim \beta$ (and hence the poset $P(X)$ consists of exactly one point).

\begin{figure}[htb]
\centering 
 \def\svgwidth{0.9\textwidth}
 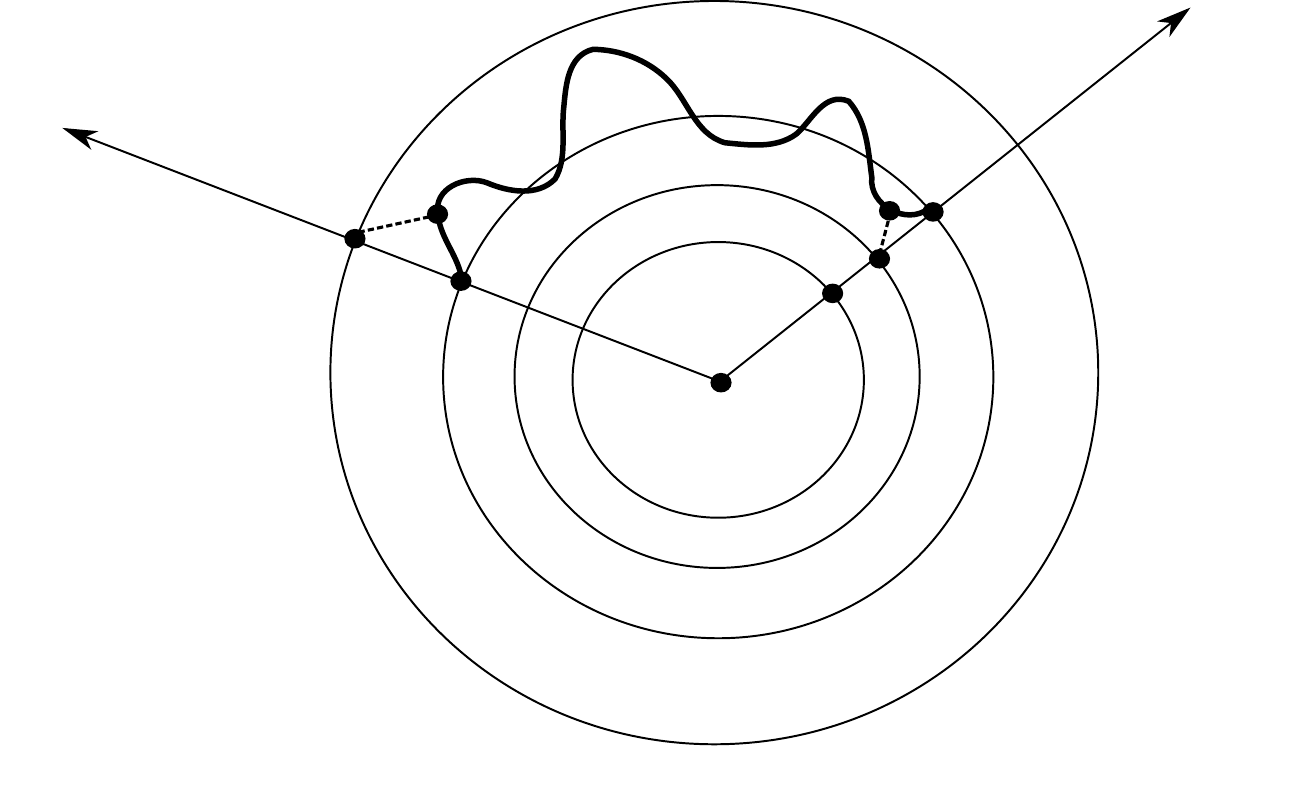
\caption{
The figure demonstrates the concatenation $\gamma : = [o, p']_{\alpha} \cup p', a] \cup [ a, b] \cup [b, d]_{\sigma} \cup [d, e] \cup \bar{\beta}|_{[e, \infty)}$ is the desired quasi-geodesic which quasi-redirects $\alpha$ to $\bar{\beta}$ at radius $\frac{35s}{72}$
}\label{linear}
\end{figure}

Since the divergence of $X$ is linear, it follows that there exists a constant $C >0$ such that for every $r >1$ then
$$
\operatorname{Div}_{2} (r, 1/2) \le C r
$$
That is
for every $x  \neq y $ in the sphere $S(o,r)$, there exists a path $\alpha$  connecting $x$ to $y$ lying outside the open ball $B^{o}_{\frac{r}{2} -2}$ such that $\ell(\alpha) \le C r$.

Recall that $t_r$ denote the first time the path $\alpha$ first intersects $B^{c}_{r} : = X  \backslash B^{o}_r$. Since $\bar{\beta}$ is a geodesic ray, we thus have $\bar{\beta}(t_r) = \bar{\beta}(r)$.

We consider the following cases.

\textbf{Case~1:} 
There exists a strictly increasing sequence $(r_n)_{n \in \mathbb{N}}$ such that for each $n \in \mathbb{N}$, the inequality 
$$
d(\bar{\beta}(r_n), \alpha(t_{\frac{35r_n}{72}})) \leq \frac{39r_n}{72}
$$ holds. 
It follows that 
$$d(\beta(r_n), \alpha) \le \frac{39r_n}{72} 
$$
By Lemma~\ref{redirect11}, $\alpha$ can be $(9q, Q)$--quasi-redirected to $\bar{\beta}$.

\textbf{Case~2:} Case~1 does not hold, that is, for every increasing sequence $r_1 < r_2 < \ldots$, there exists $n_0 \in \mathbb N$ such that 
$$
d(\bar{\beta}(r_{n_0}), \alpha(t_{\frac{35r_{n_0}}{72}})) > \frac{39r_{n_0}}{72}
$$ 
It follows that there exists an increasing subsequence $4< s_1 < s_2 < \ldots$ of $\{r_n\}$ such that
$$
d(\bar{\beta}(s), \alpha(t_{\frac{35s}{72}
})) > \frac{39s}{72} 
$$ for all $s = s_i$.

 Let $\zeta$ be a geodesic ray based at $o$ and passes through $p': = \alpha(t_{\frac{35s}{72}})$. Let $p$ denote the intersection point of $\zeta$ with $S(o, s)$. We have
$$ d(\bar{\beta}(s), p) \ge d (\bar{\beta}(s), p') - d(p', p) 
    \ge \frac{39s}{72} - (s - \frac{35s}{72})= \frac{s}{36}$$

    \textit{Claim~1:}   There exists a constant $L = L(C)$, and  an $(L,0)$--quasi-geodesic $\sigma$  connecting $p$ to $\bar{\beta}(s)$ such that $\ell(\sigma) < Cs$ and $\sigma$ lies in the annulus $B_{(1+C)s} \backslash B^{o}_{\frac{71s}{144}-2}$.

Indeed, since both points $\bar{\beta}(s)$ and $p$ lie in the sphere $S(o, s)$ and the divergence of $X$ is linear, it follows that there is a path $\tau$ connecting $p$ to $\bar{\beta}(s)$ lying outside the open ball $B^{o}(o, s/2 -2)$ and the length of $\tau$ satisfies $$\ell(\tau) \le Cs$$ 

By Lemma~\ref{ll2}, there exists a constant $L$ depending only on $C$, an $(L,0)$--quasi-geodesic $\sigma$ from $p$ to $\bar{\beta}(s)$ so that $\ell(\sigma) < \ell(\tau) < Cs$ and $\sigma \subset \mathcal{N}_{\frac{s}{144}} (\tau)$ where $\mathcal{N}_{\frac{s}{144}}(\tau)$ we mean the $\frac{s}{144}$--neighborhood of $\tau$.
Since $\tau$ lies outside the ball $B(o, s/2 -2)$, it follows that $\sigma$ lies outside the ball $B_{\frac{71s}{144} -2}$.

Let $R := \operatorname{max} \{ d(x , o) : x \in \sigma \}$.Then we have $\sigma \subset B_{R}$. Note that $R \le \ell(\sigma) + d(\bar{\beta}(s), o) \le Cs + s = (1+C)s$. Thus $\sigma$ lies inside the ball $B_{(1+C)s}$.

In the rest of the proof, we will construct the desired quasi-geodesic. 

Let $a$ be the intersection point of the geodesic ray $\zeta$ with the sphere $S_{\frac{71s}{144} -2}$. Let $b \in \sigma$ be the nearest point to $a$.  Since $\sigma$ is an $(L,0)$--quasi-geodesic, it follows that the concatenation $ \gamma_1 : = [a,b] \cup [b, \bar{\beta}(s)]_{\sigma}$ is a $(3L, 0)$--quasi-geodesic by Lemma~\ref{concate}.
 
Let $e : = \bar{\beta}((1+C)s)$, and let $d \in \gamma_1 =  [a,b] \cup [b, \bar{\beta}(s)]_{\sigma}$ be the nearest point to $e$ (we refer the reader to Figure~\ref{linear}). Again by Lemma~\ref{concate}, we have that the concatenation $\gamma_2 : = [e, d] \cup [d, a]_{\gamma_1}$ is a $(9L,0)$--quasi-geodsic.

We have
\begin{align*}
    \ell(\gamma_2) & \le d(e,d) + d(a,b) + \ell(\sigma) \\
    &\le d(e, \bar{\beta}(s)) + d(a, p) + Cs  \\
    &\le Cs + 2 + \frac{74s}{144} + Cs \\
    &\le (\frac{75}{144} + 1 + 2C)s
\end{align*}

Applying Lemma~\ref{lem:keylemma}, we have that the concatenation 
$$
\xi: = [o, p']_{\alpha} \cup [p', a] \cup \gamma_2
$$ is a $(M, M)$--quasi-geodesic where $M$ is the constant given by Lemma~\ref{lem:keylemma}.

Now, every point $u \in \bar{\beta}|_{(1+C)s}$, its nearest point projection on $\xi$ is $e$ since $\xi$ lies entirely in the ball $B_{(1+C)s}$. According to Lemma~\ref{concate}, the concatenation $\xi \cup \bar{\beta}|_{\ge (1+C)s}$ is a $(3M, M)$--quasi-geodesic. 

Thus, we can redirect $\alpha$ to the geodesic ray $\bar{\beta}$ at radius $s$.
Since this can be done for an increasing sequence of radius $s$ (i.e, the sequence $s_1 < s_2 < \ldots $), it follows from Lemma~\ref{lem:infinitepoints} that $\alpha$ can be $(3M, M)$--quasi-redirected to $\bar{\beta}$. 

In conclusion, enlarging constants $M$ if necessary, we have that $\alpha$ can be $(3M, M)$--quasi-redirected to  $\bar{\beta}$ in both Case~1 and Case~2.

Since $\bar{\beta}$ can be $(3q, Q)$--quasi-redirected to $\beta$ by Lemma~\ref{lem:geodesicrepresentativeunder}, it follows from \cite[Lemma 3.2]{QR24} that $\alpha$ can be $(3q +    3M +1, Q+ M)$--quasi-redirected to $\beta$.

Using symmetric argument, it can be shown that $\beta$ is $(3q+ 3M +1, Q + M)$--quasi-redirected to $\alpha$.  Therefore $$\alpha \sim \beta$$ 
In particular, the poset $P(X)$ consists of exactly one point and satisfies all three QR-Assumptions. Therefore $\partial_{*} X$ is well-defined and  consists of only one point.

\end{proof}

\section{Quasi-redirecting boundary of finitely generated 3-manifold groups}
\label{sec:3-manifold}

In this section, we are going to prove Theorem~\ref{thm:main2} which says all finitely generated 3-manifold groups have well-defined QR-boundaries.

A compact orientable irreducible $3$--manifold $M$ with empty or tori boundary is called \emph{geometric} if its interior admits a geometric structure in the sense of Thurston which are $3$--sphere, Euclidean $3$--space, hyperbolic $3$--space, $S^{2} \times \mathbb R$, $\mathbb{H}^{2} \times \mathbb{R}$, $\widetilde{SL}(2, \mathbb{R})$, Nil and Sol. Otherwise, it is called \textit{non-geometric}. The QR-boundaries have been studied in \cite{NQ25}.

\begin{thm} \cite[Theorem A]{NQ25}
\label{thm:nongeometric}
Let $M$ be an non-geometric 3-manifold. Then the fundamental group $G = \pi_1(M)$ satisfies
the QR-Assumptions and hence  $\partial_{*} G$ is well-defined.    
\end{thm}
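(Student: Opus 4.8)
The plan is to run Thurston's geometric (JSJ) decomposition of $M$ and split into the two structural regimes that govern the coarse geometry of $G=\pi_1(M)$. By geometrization, a non-geometric $M$ admits a nontrivial decomposition along incompressible tori into finitely many pieces, each of which is either Seifert fibered or atoroidal (finite-volume hyperbolic with toral cusps); correspondingly $G$ is the fundamental group of a finite graph of groups whose vertex groups are the piece groups and whose edge groups are the $\Z^2$ torus groups, and in particular $G$ is one-ended. First I would dispose of QR-Assumption 0 and QR-Assumption 1 at this level. Since $G$ acts geometrically on its Cayley graph $X$, which is proper, geodesic and one-ended, every vertex extends to an infinite $\qq_0$--ray (no dead ends), and Lemma~\ref{lem:geodesicrepresentativeunder} produces a geodesic representative directing into any class, so QR-Assumption 1 reduces to a routine properness-and-cocompactness argument. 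The genuine content is QR-Assumption 2, and for that I separate two cases.

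\textbf{Graph-manifold case.} If every JSJ piece is Seifert fibered, then $G$ is a Croke--Kleiner admissible group \cite{CK02}, whose QR-boundary is already known to be well-defined, with QR-poset of height $2$ reflecting quadratic divergence \cite{QR24}. Thus QR-Assumptions 0--2 hold and $\partial_*G$ exists.

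\textbf{Mixed case.} If at least one piece is hyperbolic, I would invoke relative hyperbolicity: the group of a mixed $3$--manifold is hyperbolic relative to a finite collection $\mathcal{P}$ of peripheral subgroups, each of which is either the fundamental group of a maximal graph-manifold piece or a $\Z^2$ carried by a JSJ or boundary torus adjacent to a hyperbolic piece. Each peripheral has a well-defined QR-boundary: the graph-manifold peripherals by the previous case, and the $\Z^2$ peripherals by Theorem~\ref{thm:main1}, since $\Z^2$ has linear divergence and hence a single-point QR-boundary. I would then apply the combination result of \cite{QR24}: when $G$ is hyperbolic relative to peripherals each satisfying the QR-Assumptions, $G$ satisfies them too. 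Concretely, QR-Assumption 2 is checked by decomposing a $\qq$--ray into its excursions through peripheral cosets and its itinerary in the relative (hyperbolic) geometry; along the hyperbolic part redirection is governed by the contracting behavior of transversal geodesics, inside each coset by that peripheral's redirecting function, and the two are spliced using the projection and annulus surgeries of Lemma~\ref{concate} and Lemma~\ref{lem:keylemma}.

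The main obstacle I anticipate is producing a genuinely \emph{uniform} redirecting function in the mixed case. A single quasi-geodesic ray may make unboundedly many deep excursions into distinct peripheral cosets, so the function $f_\mathbf{a}$ must absorb, into one pair of constants, both the transition penalties at the (uniformly quasiconvex) coset boundaries and the individual peripheral redirecting functions; the $\Z^2$ peripherals are harmless because their QR-poset is trivial, but the graph-manifold peripherals carry genuine two-level QR-posets, so one must prevent the constants from degrading as the itinerary lengthens. I expect this to require the distance formula for relatively hyperbolic groups together with bounded coset penetration and uniform quasiconvexity of quasi-geodesics, so that redirection can be performed coset-by-coset with constants depending only on $\qq$ and on the finitely many peripheral redirecting functions. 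Once QR-Assumption 2 is established uniformly, $\partial_*G$ is well-defined in both cases, which proves the theorem.
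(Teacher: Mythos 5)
Note first that the paper you were given does not prove this statement at all: it is imported verbatim as \cite[Theorem A]{NQ25}, so the only meaningful comparison is with the strategy of \cite{NQ25}, and your outline --- geometrization into Seifert fibered and hyperbolic JSJ pieces, the pure graph-manifold case handled through Croke--Kleiner admissible groups, and the mixed case handled through relative hyperbolicity with graph-manifold and $\Z^2$ peripherals plus a combination theorem --- is essentially that proof. Two corrections, though: the admissible-group case and the combination theorem for relatively hyperbolic groups are Theorems B and D of \cite{NQ25}, not results of \cite{QR24} as you cite them, and your claim that QR-Assumption 1 ``reduces to a routine properness-and-cocompactness argument'' understates a genuinely delicate step --- Lemma~\ref{lem:geodesicrepresentativeunder} only produces a geodesic ray $\bar{\alpha}$ with $\bar{\alpha} \preceq \alpha$, which does not place $\bar{\alpha}$ in the class $[\alpha]$, and exhibiting a $\qq_0$--representative inside every equivalence class is a substantive part of the argument in \cite{NQ25} rather than a formal consequence of cocompactness. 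With those attributions fixed and QR-Assumption 1 given its due weight, your plan, including the correctly identified main difficulty of making the redirecting function of QR-Assumption 2 uniform over unboundedly many peripheral excursions, tracks the actual proof.
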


We now address the geometric case.

\begin{lem} [ Geometric case]
\label{lem:geometric}
Let $M$ be an geometric 3-manifold. Then  $\partial_{*} \pi_1(M)$ is well-defined.      
\end{lem}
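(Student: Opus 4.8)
The plan is to prove Lemma~\ref{lem:geometric} by going through Thurston's eight geometries case by case, reducing each to a previously established result. The key observation is that a geometric $3$--manifold $M$ has a fundamental group whose large-scale geometry is completely determined by which of the eight geometries it models, so $\partial_* \pi_1(M)$ can be computed or shown well-defined geometry by geometry.

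First I would dispose of the geometries whose fundamental groups have trivial or elementary coarse geometry. If $M$ is modeled on $S^3$ then $\pi_1(M)$ is finite, and if it is modeled on $S^2 \times \mathbb{R}$ then $\pi_1(M)$ is virtually cyclic; in both cases the Cayley graph is quasi-isometric to a point or a line, so $\partial_* \pi_1(M)$ is well-defined and easily described (empty or two points). For the hyperbolic geometry $\mathbb{H}^3$, the group $\pi_1(M)$ is Gromov-hyperbolic, and by \cite{QR24} the QR-boundary exists and agrees with the Gromov boundary, so this case is immediate.

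The heart of the argument is the remaining five geometries, and here the strategy is to invoke Theorem~\ref{thm:main1}. The geometries $\mathbb{E}^3$ (Euclidean), Nil, and Sol all yield fundamental groups with linear divergence -- for $\mathbb{E}^3$ the group is virtually $\mathbb{Z}^3$ hence has linear divergence as a one-ended group satisfying a law (or directly, being a product), and for Nil and Sol this is exactly the content of \cite{Ger94} as cited in the introduction -- so Theorem~\ref{thm:main1} applies directly to give a well-defined single-point QR-boundary. For $\mathbb{H}^2 \times \mathbb{R}$ and $\widetilde{SL}(2,\mathbb{R})$, the fundamental group is quasi-isometric to a product of the form $H \times \mathbb{R}$ where $H$ is the fundamental group of a hyperbolic surface (or the relevant Fuchsian factor); such a direct product of two infinite groups has linear divergence, so again Theorem~\ref{thm:main1} yields a well-defined one-point QR-boundary. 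Throughout I would use the quasi-isometry invariance of divergence from \cite{DMS10} and of the QR-boundary from Theorem~\ref{thm:collectfacts} to pass freely between $\pi_1(M)$ and its model geometry.

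The main obstacle is handling the boundary cases cleanly: when $M$ has nonempty boundary (higher genus boundary components), the manifold may not be closed, and one must be careful that the geometric classification and the divergence computations still apply to $\pi_1(M)$ rather than to a closed model. The cleanest route is to note that for a geometric $M$ with boundary, $\pi_1(M)$ is quasi-isometric to the fundamental group of a closed (or finite-volume) model of the same geometry, or that the relevant divergence estimates of \cite{Ger94} and \cite{DMS10} are stated at the level of the group and remain valid; I would verify that in each of the five linear-divergence cases the group is genuinely wide. Once every geometry is accounted for, the lemma follows by collecting the eight cases, and combined with Theorem~\ref{thm:nongeometric} this will complete the proof of Theorem~\ref{thm:main2}.
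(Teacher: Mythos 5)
Your proposal follows essentially the same route as the paper: a case-by-case run through the eight geometries, disposing of $S^3$ and $S^2\times\mathbb{R}$ as finite/virtually cyclic, invoking Theorem~\ref{thm:main1} via linear divergence for $\mathbb{E}^3$, Nil, Sol and the product-type geometries, using quasi-isometry invariance (Theorem~\ref{thm:collectfacts}) to transfer $\widetilde{SL}(2,\mathbb{R})$ to $\mathbb{H}^2\times\mathbb{R}$, and citing \cite{QR24} for $\mathbb{H}^3$.

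Two small corrections. First, in the $\mathbb{H}^3$ case you assert that $\pi_1(M)$ is Gromov-hyperbolic; this fails when $M$ is a finite-volume cusped hyperbolic manifold (e.g.\ the figure-eight knot complement), where $\pi_1(M)$ is only hyperbolic \emph{relative} to its $\mathbb{Z}^2$ cusp subgroups. The conclusion still holds because \cite{QR24} establishes well-defined QR-boundaries for relatively hyperbolic groups (with peripherals such as $\mathbb{Z}^2$ that satisfy the QR-Assumptions), which is exactly how the paper cites it, but your justification as stated would fail for cusped manifolds. Second, your worry about geometric manifolds with higher genus boundary is moot: the paper's definition of \emph{geometric} requires empty or toroidal boundary, and the higher genus boundary case is treated separately (Lemma~\ref{lem:highergenus}), so no extra care is needed inside this lemma.
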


\begin{proof}
 $M$ has a geometric structure modeled on eight geometries in the sense of Thurston: $S^3$, $\mathbb{R}^{3}$, $S^{2} \times \mathbb{R}$, Nil, $\widetilde{SL(2,\mathbb{R})}$, $\mathbb{H}^2\times \mathbb{R}$, $\mathbb{H}^{3}$, and Sol.    

 \begin{enumerate}
    \item If the geometry of $M$ is spherical, then its fundamental group is finite, and hence $\partial_{*} (\pi_1 (M))$ is empty.

    \item  If the geometry of $M$ is $\mathbb{E}^3$, then there exists a finite index subgroup $K \le \pi_1(M)$ such that $K$ is isomorphic to $\mathbb{Z}^3$. It follows that $\pi_1(M)$ is quasi-isometric to $\mathbb{E}^3$, and hence $\partial_{*} (\pi_1(M))$ consists of only one point as $\mathbb E^3$ has linear divergence.

    \item If the geometry of $M$ is $S^2 \times \mathbb{R}$, then there exists a finite index subgroup $K \le \pi_1(M)$ such that $K$ is isomorphic to $\mathbb Z$. It follows that $\pi_1(M)$ is quasi-isometric to $\mathbb{Z}$, and hence $\partial_{*} (\pi_1(M))$ is well-defined and consists of only two points.

    \item If the geometry of $M$ is $\mathbb{H}^2 \times \mathbb R$, then $M$ is finitely covered by $M' = \Sigma \times S^1$ where $\Sigma$ is a compact surface with negative Euler characteristic. It follows that $\pi_1(M)$ is quasi-isometric to the direct product $\pi_1(\Sigma) \times \mathbb Z$, and hence $\partial_{*} (\pi_1(M))$ is well-defined and consists of only one point.

    \item If $M$ has a geometry modeled on $\widetilde{SL(2,\mathbb R)}$, then $\partial_{*} (\pi_1(M))$ is well-defined and consists of only one point since two geometries $\mathbb{H}^2 \times \mathbb R$ and  $\widetilde{SL(2,\mathbb R)}$ are quasi-isometric and QR-boundary is a quasi-isometric invariant (Theorem~\ref{thm:collectfacts}).

    \item If $M$ has a geometry modeled on $\mathbb{H}^3$ then $M$ is a hyperbolic 3-manifold with finite volume. $\partial_{*} (\pi_1(M))$ is well-defined as shown in \cite{QR24}.

    \item Finally, If $M$ has a geometry modeled on Sol or Nil then $\pi_1(M)$ has linear divergence (see the first and second paragraphs in the proof of \cite[Theorem 4]{Ger94}, and thus Theorem~\ref{thm:main1} implies that $\partial_{*} (\pi_1(M))$ is well-defined and consists of only one point.
\end{enumerate}
\end{proof}

Below, we explain how one might reduce the study of all finitely generated 3–manifold
groups  to the case of compact, orientable, irreducible, $\partial$--irreducible 3–manifold groups.

Let $M$ be 3-manifold with finitely generated fundamental group. 
By Scott's Core
Theorem, $M$ contains a compact codimension zero submanifold whose inclusion map is a homotopy equivalence \cite{Sco73}, and thus also an isomorphism on fundamental groups. We thus can assume $M$ is compact. Since QR-boundary is a quasi-isometric invariant (Theorem~\ref{thm:collectfacts}), we can assume that M is orientable by passing to a double
cover if necessary. 

We can also assume that $M$ is irreducible and $\partial$--irreducible by the following reason:
Since $M$ is a compact, orientable $3$--manifold, it decomposes into irreducible, $\partial$--irreducible pieces $M_1, \dots, M_k$ (by the sphere-disc decomposition). In particular, $\pi_1(M)$ is the free product $$
\pi_1(M) = \pi_1(M_1)* \pi_1(M_2)* \cdots *\pi_1(M_k)
$$ Let $G_i = \pi_1(M_i)$. We remark here that $\pi_1(M)$ is hyperbolic relative to the collection $\mathbb{P} = \{G_1, \cdots, G_k \}$. According to \cite[Theorem D]{NQ25}, if the QR-boundary exist for each peripheral subgroup $G_i$ then the QR-boundary of $\pi_1(M)$ exists. In other
words, for the purpose of showing QR-boundary exists for $\pi_1(M)$, we only need to focus on the case where the manifold $M$ is compact, connected, orientable, irreducible, and $\partial$--irreducible.

If $M$ has empty or toroidal boundary, the existence of QR-boundary of $\pi_1(M)$ would follow from Theorem~\ref{thm:nongeometric} and Lemma~\ref{lem:geometric}.

We note that the compact, connected, orientable, irreducible and $\partial$--irreducible manifold $M$ could have boundary components that are higher genus surfaces. The following lemma addresses certain
manifolds with higher genus boundary.

\begin{lem}
    \label{lem:highergenus}
    Let $M$ be a compact, orientable, irreducible, $\partial$--irreducible $3$--manifold which has at least one boundary component of genus at least $2$.  Then the fundamental group $G = \pi_1(M)$ satisfies
the QR-Assumptions and hence  $\partial_{*} G$ is well-defined.    
\end{lem}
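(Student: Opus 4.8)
The plan is to realize $G = \pi_1(M)$ as a relatively hyperbolic group all of whose peripheral subgroups have well-defined QR-boundaries, and then to invoke \cite[Theorem D]{NQ25}, exactly as in the free-product reduction above. The starting observation is that a higher genus boundary component cannot bound a Seifert-fibered piece: the boundary components of an orientable Seifert-fibered space are tori, and hence every graph manifold has toroidal boundary. Consequently a compact orientable irreducible $\partial$-irreducible $M$ with a boundary component of genus at least $2$ is neither Seifert-fibered nor a graph manifold, so its geometric (torus) decomposition must contain at least one hyperbolic piece.

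Next I would invoke Thurston's geometrization for Haken manifolds together with the (by now standard) relatively hyperbolic structure of mixed 3-manifold groups. Cutting $M$ along its canonical family of incompressible tori yields geometric pieces, each either Seifert-fibered or atoroidal; by geometrization the atoroidal pieces are hyperbolic, with the higher genus boundary appearing as totally geodesic boundary and the tori as cusps. Collapsing the maximal unions of Seifert pieces into graph-manifold blocks $B_1, \dots, B_m$, one obtains that $G$ is hyperbolic relative to the collection $\mathbb{P} = \{\pi_1(B_1), \dots, \pi_1(B_m)\} \cup \{\mathbb{Z}^2 \text{ peripherals}\}$, where the $\mathbb{Z}^2$ factors arise from the boundary tori adjacent to hyperbolic pieces. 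When $M$ itself is atoroidal and has no toroidal boundary, the collection $\mathbb{P}$ is empty and $G$ is word-hyperbolic; this case (including the $I$-bundle $M = \Sigma \times I$, where $G = \pi_1(\Sigma)$) is covered directly, since hyperbolic groups satisfy the QR-Assumptions with $\partial_{*} G$ the Gromov boundary.

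Finally I would check that every peripheral subgroup has a well-defined QR-boundary. A peripheral $\mathbb{Z}^2$ has linear divergence, so $\partial_{*}(\mathbb{Z}^2)$ is a single point by Theorem~\ref{thm:main1}. A block $B_j$ consisting of a single Seifert piece with toroidal boundary has fundamental group that is virtually a direct product of $\mathbb{Z}$ (the fiber) with an infinite group, hence has linear divergence, and Theorem~\ref{thm:main1} again applies. A block $B_j$ containing at least two Seifert pieces is a graph manifold, i.e.\ a Croke-Kleiner admissible space, so $\partial_{*}\pi_1(B_j)$ is well-defined by \cite[Theorem A]{NQ25} (equivalently by the Croke-Kleiner admissible case, \cite{CK02}). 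With all peripheral QR-boundaries established, \cite[Theorem D]{NQ25} yields that $\partial_{*} G$ is well-defined. The main obstacle I anticipate is not a single computation but pinning down the precise relatively hyperbolic structure and verifying the peripheral classes uniformly: one must correctly track which tori become cusps versus which are absorbed into the graph-manifold blocks, and one must handle the low-complexity exceptional pieces (small Seifert pieces and $I$-bundles, where geometrization behaves atypically), ensuring that each still falls under Theorem~\ref{thm:main1}, \cite[Theorem A]{NQ25}, or the word-hyperbolic case.
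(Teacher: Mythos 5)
Your route is genuinely different from the paper's, and its overall architecture (a relatively hyperbolic structure, then \cite[Theorem D]{NQ25}, with Theorem~\ref{thm:main1} handling the peripherals) is plausible; your verification of the peripheral classes is essentially correct as far as it goes: $\mathbb{Z}^2$'s and single Seifert pieces have linear divergence, larger graph-manifold blocks fall under \cite[Theorem A]{NQ25}, and the atoroidal case with no boundary tori gives a word-hyperbolic group. But the crux of your argument --- that $\pi_1(M)$ is hyperbolic relative to the graph-manifold blocks and the relevant $\mathbb{Z}^2$'s --- is exactly the point where you cannot simply invoke ``the by now standard relatively hyperbolic structure of mixed 3-manifold groups.'' The standard statements (Dahmani's combination theorem as packaged in Aschenbrenner--Friedl--Wilton, Bigdely--Wise, Drutu--Sapir, Behrstock--Drutu--Mosher) are for compact orientable irreducible manifolds with \emph{empty or toroidal} boundary; manifolds with higher genus boundary are excluded, and that exclusion is precisely what this lemma exists to address. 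To fill the gap you would have to run the combination argument yourself: show that each non-Seifert piece of the torus decomposition carries a geometrically finite hyperbolic structure whose parabolics are exactly its boundary tori (a pared-manifold statement from Thurston's hyperbolization, which in particular requires ruling out essential annuli between tori of a piece), invoke Bowditch's theorem that geometrically finite Kleinian groups are hyperbolic relative to their maximal parabolic subgroups, and then induct over the JSJ graph with Dahmani's combination theorem, tracking which tori become peripheral $\mathbb{Z}^2$'s and which are absorbed into blocks. I believe all of this is true, but none of it is a citation, and as written the central step of your proof is an assertion --- the very assertion whose absence from the literature motivated the paper's detour.

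For comparison, the paper's proof avoids relative hyperbolicity of $\pi_1(M)$ altogether: following Sun's construction, it pastes compact hyperbolic manifolds with totally geodesic boundary onto the higher genus boundary components of $M$ to obtain a manifold $N$ with empty or toroidal boundary (finite-volume hyperbolic or mixed), to which \cite[Theorem A]{NQ25} applies; it then uses the fact, proved in \cite{NS19}, that $\widetilde{M} \hookrightarrow \widetilde{N}$ is a quasi-isometric embedding, together with a surgery argument replacing each excursion of a redirecting quasi-geodesic into $\widetilde{N} \setminus \widetilde{M}$ by a nearby geodesic of $\widetilde{M}$ (using quasiconvexity of $\pi_1(M_j)$ in the word-hyperbolic $\pi_1(N_j)$), to pull the QR-Assumptions back from $\widetilde{N}$ to $\widetilde{M}$. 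In effect, the paper trades your unproved relative hyperbolicity statement for a quasi-isometric embedding plus quasiconvexity, both of which are citable. If you supply the combination-theorem argument sketched above (or locate a reference covering higher genus boundary), your proof becomes a clean and arguably more structural alternative; without it, it has a genuine gap.
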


\begin{proof}
 As in 
  \cite[Section~6.3]{Sun20}, we can paste compact hyperbolic 3-manifolds with totally geodesic boundaries to the higher genus boundary components of $M$ to obtain a finite volume hyperbolic manifold $N$ (in case $M$ has trivial torus decomposition) or a mixed 3-manifold (in case $M$  has non-trivial torus decomposition). By \cite[Theorem A]{NQ25}, $\pi_1(N)$ has well-defined QR-boundary. The new manifold $N$ satisfies the following properties.
\begin{enumerate}
    \item \label{item1} $M$ is a submanifold of $N$ with incompressible tori boundary.
    \item  \label{item2} The torus decomposition of $M$ also gives the torus decomposition of $N$.
    \item \label{item3} Each piece of $M$ with a boundary component of genus at least $2$ is contained in a hyperbolic piece of $N$.
\end{enumerate}

Since $N$ contains at least one hyperbolic piece, we equip $N$ with a nonpositively curved metric as in \cite{Leeb95}. This metric induces a metric on the universal cover $\tilde{N}$. 

Let $M'_{1}, \dots, M'_{k}$ be the pieces of $M$ that satisfies (\ref{item3}). Let $N'_i$ be the hyperbolic piece of $N$ such that $M'_i$ is contained in $N'_i$. We remark here that it has been proved in \cite{NS19} that the inclusion of the subgroup $\pi_1(M)$ in $\pi_1(N)$ is a quasi-isometric embedding (see the proof of Case~1.2 in the proof of Theorem~1.3 in \cite{NS19}), and hence the inclusion $\widetilde{M} \to \widetilde{N}$ is a $(\lambda_1, c_1)$--quasi-isometric embedding for some uniform constants $\lambda_1, c_1$.

Since QR-boundary is a quasi-isometric invariant (see Theorem~\ref{thm:collectfacts}), it suffices to show that the QR-boundary exists for the universal cover $\widetilde{M}$.

Fix a base point $o \in \widetilde{M}$. Let $\alpha$ and $\beta$ be two $\qq$--rays in $\widetilde{M}$ based at $o$.  Since the inclusion $\widetilde{M} \to \widetilde{N}$ is a $(\lambda_1, c_1)$--quasi-isometric embedding, $\alpha$ and $\beta$ are $\qq'$--rays in $\widetilde{N}$ for some $q' = q' (q, Q, \lambda_1, c_1)$ and $Q' = Q'(q, Q, \lambda_1, c_1)$.

\textit{Claim~1:} There exists a function $\mathcal{M} \colon [1, \infty) \times [0, \infty) \to [1, \infty) \times [0, \infty)$ such that the following holds.
At every radius $r$, if $\alpha$ can be quasi-redirected to $\beta$ at radius $r$ via a $(A, B)$--quasi-geodesic $\gamma$ in $\widetilde{N}$ then $\alpha$ can be quasi-redirected to $\beta$ at radius $r$ via a $\mathcal{M}(A, B)$--quasi-geodesic $\gamma'$ which lies entirely in $\widetilde{M} \subset \widetilde{N}$.

\begin{proof}[Proof of the claim] Indeed, let $s$ be the landing time of $\gamma$ on $\beta$. By the construction of the manifold $N$, the restriction of $\gamma$ on $[0,s]$  can be decomposed into a concatenation 
$$
\gamma|_{[0, s]}  = \alpha_1 \cdot \beta_1 \cdot \alpha_2 \cdot \beta_2 \cdots \alpha_{\ell} \cdot \beta_{\ell} \cdot \alpha_{\ell+1}
$$ 
such that:
\begin{itemize}
    \item For each $j$, the subpath $\alpha_j$ is a subset of $\tilde{M}$, and $\beta_j$ intersects $\tilde{M}$ only at its endpoints. Here $\alpha_1$ and $\alpha_{\ell+1}$ might degenerate to points.

    \item Moreover, there are pieces $\tilde{M}'_j$ and $\tilde{N}'_j$ of $\tilde{M}$ and $\tilde{N}$ respectively such that $\tilde{M}'_j \subset \tilde{N}'_j$, $\beta_{j} \subset \tilde{N}'_j$, and the endpoints of $\beta_j$ lies in $\widetilde{\Sigma}_j \subset \tilde{M}'_j$ where $\widetilde{\Sigma}_j$ is the universal cover of some boundary component of genus at least 2 of $M_j$.
\end{itemize} 

Since each inclusion $\pi_1(M_j) \to \pi_1(N_j)$ is a quasi-isometric embedding, and $\pi_1(N_j)$ is a hyperbolic group, the subgroup $\pi_1(M_j)$ is a quasi-convex in $\pi_1(N_j)$ by Morse Lemma. 

Since there are finitely many pieces $M_j$'s, it follows that there exists a function $\mathcal{M}' \colon [1, \infty) \times [0 , \infty) \to  [1, \infty) \times [0 , \infty)$ such that for every $(\lambda, c)$--quasi-geodesic $\beta$ in $\widetilde{N}_j$ with endpoints in $\widetilde{M}_j$ will eventually lies in the $\mathcal{M}'(\lambda, c)$--distance from a geodesic in $\widetilde{M}_j$ connecting the two endpoints of the quasi-geodesic $\beta$.
Thus each $(A, B)$--quasi-geodesic $\beta_j$ in $\widetilde{N}_j$  lies within a $R$-distance from a geodesic in $\widetilde{M}_j$ connecting two endpoints of $\beta_j$, denoted $\beta'_j$ in $\widetilde{M}_j$. Define 
$$
\bar{\gamma} : = \alpha_1 \cdot \beta'_1 \cdot \alpha_2 \cdot \beta'_2 \cdots \alpha_{\ell} \cdot \beta'_{\ell} \cdot \alpha_{\ell+1}
$$ 
and 
$$
\gamma' : = \bar{\gamma} \cup \gamma|_{[s, \infty)}
$$
that lies entirely in $\widetilde{M}$.  Since each $\beta_j$ lies within a $R$--distance from $\beta'_j$ and $\alpha_j \subset \gamma$ and $\gamma$ is a quasi-geodesic ray, it follows that  $\gamma'$ is a $\mathcal{M}(A, B))$--quasi-geodesic ray.
\end{proof}

\textbf{Claim~2:} $\widetilde{M}$ satisfies all three QR-Assumptions, and hence it has well-defined QR-boundary.

For every quasi-geodesic ray $\alpha$ in $\widetilde{M}$, it is also a quasi-geodesic ray in $\widetilde{N}$ since the inclusion $\widetilde{M} \to \widetilde{N}$ is a quasi-isometric embedding. In \cite{NQ25}, the class $[\alpha] \in P(\widetilde{N})$ contains a geodesic representative $\underline{\alpha}$ and this geodesic lies in $\widetilde{M}$ by our construction of $N$ (i.e, the torus decomposition of $M$ also gives the torus decomposition of $N$) . We thus consider $\underline{\alpha}$ is a representative of $[a] \in P(\widetilde{M})$ as well.

Given $\mathbf{a} \in P(\widetilde{M})$, we consider $i(\mathbf{a}) \in P(\widetilde{N})$ where $i \colon P(\widetilde{M}) \to P(\widetilde{N})$ is the inclusion. By \cite[Theorem A]{NQ25}  there is a function 
\[
f_{i(\mathbf{a})} : \, [1, \infty) \times [0, \infty) \to [1, \infty) \times [0, \infty), 
\] 
called the redirecting function of
the class $i(\mathbf{a})$. Now let $\mathbf{b} \in P(\widetilde{M})$ such that $\mathbf{b} \prec \mathbf{a}$. In particular, $\mathbf{b} \prec i(\mathbf{a})$  such that if $\mathbf{b} \prec i(\mathbf{a})$ where $\mathbf{b} \in P(\widetilde{N})$ then any ${\qq}$--ray $\beta \in \mathbf{b}$ can be
$f_{i(\mathbf{a})}({\qq})$--quasi-redirected to $\underline{a}$ via $f_{i(\mathbf{a})}(\qq)$--rays in $\widetilde{N}$. 
According to Claim~1, such $f_{i(\mathbf{a})}(\qq)$--rays in $\widetilde{N}$ can be modified to be $\mathcal{M}(f_{i(\mathbf{a})}(\qq))$--rays in $\widetilde{M}$ that quasi-redirect $\beta$ to $\underline{a}$.
This shows that $\widetilde{M}$ would satisfy all three QR-Assumptions and thus $\pi_1(M)$ has well-defined QR-boundary.    
\end{proof}

\begin{proof}[Proof of Theorem~\ref{thm:main2}]
By applying Scott's Core Theorem and, if needed, passing to a double cover, combined with the quasi-isometric invariance of the QR-boundary, we may assume $M$ is compact and orientable. We then employ sphere-disc decomposition to decompose $M$ into  irreducible, $\partial$-irreducible manifolds $M_1, M_2, \ldots, M_k$. Consequently, the fundamental group decomposes as a free product: $\pi_1(M) = \pi_1(M_1) * \pi_1(M_2) * \cdots * \pi_1(M_k)$. By \cite[Theorem D]{NQ25}, the existence of a QR-boundary for each $\pi_1(M_i)$ implies its existence for $\pi_1(M)$. This follows from Theorem~\ref{thm:nongeometric}, Lemma~\ref{lem:geometric}, and Lemma~\ref{lem:highergenus}, completing the proof.
\end{proof}

\bibliographystyle{alpha}

\end{document}